\numberwithin{equation}{section} 
\newtheorem{thm}[equation]{Theorem} 
\newtheorem{prop}[equation]{Proposition}
\newtheorem{lemma}[equation]{Lemma}
\newtheorem{cor}[equation]{Corollary}
\theoremstyle{definition}
\newtheorem{defin}[equation]{Definition}
\newtheorem{example}[equation]{Example}
\theoremstyle{remark}
\newtheorem{rmk}[equation]{Remark}
\newcommand{\Q}{\mathbb Q}
\newcommand{\F}{\mathbb F}
\newcommand{\Z}{\mathbb Z}
\renewcommand{\L}{\mathbb L}
\newcommand{\G}{\mathbb G}
\renewcommand{\P}{\mathbb P}
\newcommand{\C}{\mathbb C}
\renewcommand{\c}{\subseteq}
\newcommand{\A}{\mathbb A}
\newcommand{\mc}[1]{\mathcal{#1}}
\newcommand{\cl}{\overline}
\newcommand{\set}[1]{\{#1\}}
\newcommand{\on}[1]{\operatorname{#1}}
\newcommand{\ang}[1]{\left \langle{#1}\right \rangle}
\title[Mixed Tate property and motivic classes]{On the Mixed Tate property and the motivic class of the classifying stack of a finite group}
\author{Federico Scavia}
\begin{document}
	\maketitle

\begin{abstract}
	Let $G$ be a finite group, and let $\set{B_{\C}G}$ the class of its classifying stack $B_{\C}G$ in Ekedahl's Grothendieck ring of algebraic $\C$-stacks $K_0(\on{Stacks}_{\C})$. We show that if $B_{\C}G$ has the mixed Tate property, the invariants $H^i(\set{B_{\C}G})$ defined by T. Ekedahl are zero for all $i\neq 0$. We also extend Ekedahl's construction of these invariants to fields of positive characteristic. 
\end{abstract}

\section{Introduction}	
Let $k$ be a field, and denote by $K_0(\on{Stacks}_k)$ the Grothendieck ring of algebraic $k$-stacks, introduced by T. Ekedahl in \cite{ekedahl2009grothendieck}. It is a commutative ring with identity $1=\set{\on{Spec}k}$. By definition, every algebraic stack $\mc{X}$ of finite type over $k$ and with affine stabilizers has a class $\set{\mc{X}}$ in $K_0(\on{Stacks}_k)$. 

Assume now that $k$ is algebraically closed of characteristic zero, and let $G$ be a finite group. In \cite[Section 9]{totaro2016motive}, B. Totaro considered the following properties of the classifying stack $B_kG$: stable rationality of $B_kG$, triviality of the birational motive of $B_kG$, the equality $\set{B_kG}=1$ in $K_0(\on{Stacks}_k)$, the weak Chow-K\"unneth property for $B_kG$, the Chow-K\"unneth property for $B_kG$, and the mixed Tate property for $B_kG$. 

As Totaro observed, all known examples suggest that these properties are equivalent. Let $V$ be a faithful $G$-representation over $k$. Up to stably birational equivalence, the quotient variety $V/G$ is independent of the choice of $V$, hence the groups $H^i_{\on{nr}}(k(V)^G,\Q/\Z(j))$ are independent of $V$. When the unramified Brauer group $\on{Br}_{\on{nr}}(k(V)^G)=H^2_{\on{nr}}(k(V)^G,\Q/\Z(1))$ is non-zero, the six properties considered by Totaro fail. A proof of the equivalence of the properties of $B_kG$ listed by Totaro seems to be out of reach of current techniques.

Following Totaro, we say that $B_kG$ has the mixed Tate property if the Voevodsky motive with compact support of $B_kG$ is a mixed Tate motive; see \Cref{triang-motives}. In this paper, we investigate the relation between the equality $\set{B_kG}=1$ in $K_0(\on{Stacks}_k)$ and the mixed Tate property for $B_kG$. There is no known implication among these two properties, in either direction. 

For every commutative ring with identity $R$, let $K_0(\on{Mod}_R)$ denote the Grothen\-dieck group of finitely generated $R$-modules, that is,  the abelian group generated by isomorphism classes $[M]$ of finitely generated $R$-modules $M$, subject to the relations $[M\oplus N]=[M]+[N]$ for every two abelian groups $M,N$.  When $k=\C$, for every integer $i$ Ekedahl constructed a group homomorphism \[H^i:K_0(\on{Stacks}_{\C})\to K_0(\on{Mod}_{\Z})\] such that $H^i(\set{X}/\L^m)=H^{i+2m}(X(\C),\Z)$ for every smooth smooth $\C$-variety $X$ and every $m\in\Z$ (note that $\L$ is invertible in $K_0(\on{Stacks}_k)$; see \Cref{prelim}). When $k$ is an algebraically closed field of characteristic zero, he gave a similar definition using $\ell$-adic cohomology. For every prime $\ell$ and every $i$, one has a group homomorphism \[H^i_{\ell}:K_0(\on{Stacks}_{k})\to K_0(\on{Mod}_{\Z_{\ell}})\] such that $H^i_{\ell}(\set{X}/\L^m)=H_{\text{\'et}}^{i+2m}(X,\Z_{\ell})$ for every smooth proper $k$-variety $X$ and every $m\in\Z$. If $k=\C$, the knowledge of $H^i(\set{B_{\C}G})$ amounts to the knowledge of $H^i_{\ell}(\set{B_{\C}G})$ for every prime $\ell$; see \Cref{comparison}. The invariants $H^i(\set{B_{\C}G})$ have been further investigated in \cite{martino2016ekedahl} and \cite{martino2017introduction}.

Ekedahl's construction makes essential use of Bittner's presentation of the Gro\-thendieck ring of varieties $K_0(\on{Var}_k)$ proved in \cite{bittner2004universal}, which depends on Hironaka's resolution of singularities. In particular, at the present state of knowledge about resolution of singularities, Ekedahl's construction does not extend to fields of positive characteristic.

We now state our main result, which gives some evidence for the equivalence between the mixed Tate property for $B_kG$ and the equality $\set{B_kG}=1$ in $K_0(\on{Stacks}_k)$.
\begin{thm}\label{ekedahl-inv}
	Let $k$ be an algebraically closed field of characteristic zero, and let $G$ be a finite group.
	\begin{enumerate}[label=(\alph*)]
		\item If $\ell$ is a prime number and $B_kG$ is $\Z_{\ell}$-mixed Tate, then $H^0_{\ell}(\set{B_kG})=[\Z_\ell]$ and $H^i_{\ell}(\set{B_kG})=0$ for every $i\neq 0$.
		\item If $k=\C$ and $B_{\C}G$ is mixed Tate, then $H^0(\set{B_{\C}G})=[\Z]$ and $H^i(\set{B_{\C}G})=0$ for every $i\neq 0$.
	\end{enumerate}
\end{thm}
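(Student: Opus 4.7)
The plan is to prove part (a) and then deduce (b) from \Cref{comparison}. My strategy is to pin down the motivic class $[M^c(B_kG)]$ in $K_0(\on{DMT}(k,\Z_\ell))$ using the mixed Tate hypothesis and the special structure of the classifying stack of a finite group, and then translate this class into Ekedahl invariants. Over the algebraically closed base $k$, $K_0(\on{DMT}(k,\Z_\ell))$ is canonically the Laurent polynomial ring $\Z[\L^{\pm 1}]$ (with $\L=[\Z_\ell(-1)]$), so $[M^c(B_kG)]$ is encoded by an integer Laurent polynomial $P(\L)=\sum_m a_m\L^m$.

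The central computation extracts $P(\L)$ from weight-graded Euler characteristics:
\[
a_m=\sum_j(-1)^j\dim_{\Q_\ell}\on{gr}^W_{2m}H^j_c(B_kG,\Q_\ell).
\]
For the classifying stack of a finite group, $H^*_c(B_kG,\Q_\ell)=H^*(B_kG,\Q_\ell)$ (since $B_kG$ is proper in the stack sense) agrees with group cohomology, and since $|G|$ is invertible in the characteristic-zero field $\Q_\ell$, Maschke's theorem gives $H^j(B_kG,\Q_\ell)=0$ for all $j>0$, while $H^0(B_kG,\Q_\ell)=\Q_\ell$ is pure of weight $0$. Therefore $a_0=1$ and $a_m=0$ for $m\neq 0$, yielding $[M^c(B_kG)]=1$ in $K_0(\on{DMT}(k,\Z_\ell))$.

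The remaining step is the translation: for a $\Z_\ell$-mixed Tate class $\alpha\in K_0(\on{Stacks}_k)$ with $[M^c(\alpha)]=\sum_m a_m\L^m$, one has $H^i_\ell(\alpha)=a_{i/2}[\Z_\ell]$ for $i$ even and $H^i_\ell(\alpha)=0$ for $i$ odd. For smooth projective mixed Tate varieties this is immediate from Ekedahl's definition, since over the algebraically closed $k$, purity together with the $\Z_\ell$-mixed Tate hypothesis forces $H^{2m}_{\text{\'et}}(X,\Z_\ell)\cong\Z_\ell^{a_m}$ (torsion-free) and odd-degree vanishing. The formula propagates to general mixed Tate classes via Bittner's presentation applied to a smooth projective compactification whose boundary strata are themselves mixed Tate. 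Applying this with $[M^c(B_kG)]=1=\L^0$ yields $H^0_\ell(\set{B_kG})=[\Z_\ell]$ and $H^i_\ell(\set{B_kG})=0$ for $i\neq 0$, proving (a). Part (b) then follows from (a) via \Cref{comparison}.

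The main obstacle is this last translation step --- the integral propagation of Ekedahl's formula to mixed Tate classes built from non-projective pieces. For smooth projective inputs the formula is essentially definitional, but in general one needs either (i) a smooth projective compactification $\cl{Y}$ with SNC boundary $D=D_1\cup\cdots\cup D_r$ such that each stratum $D_I=\bigcap_{i\in I}D_i$ has $\Z_\ell$-mixed Tate motive (hence torsion-free $\ell$-adic cohomology), or (ii) a direct argument via Deligne's weight spectral sequence identifying $\sum_I(-1)^{|I|}[H^i_{\text{\'et}}(D_I,\Z_\ell)]$ with $\sum_j(-1)^j[\on{gr}^W_iH^j_c(Y,\Z_\ell)]$ in $K_0(\on{Mod}_{\Z_\ell})$ and exploiting the mixed Tate structure of the right-hand side. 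The integrality is genuinely subtle because for $\ell\mid|G|$ the cohomology $H^*(B_kG,\Z_\ell)$ is typically rich in $\ell$-torsion; the $\Z_\ell$-mixed Tate hypothesis is precisely what is needed to prevent such torsion from surviving to the Ekedahl invariants.
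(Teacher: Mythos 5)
Your guiding intuition is sound and close to what the paper does: the mixed Tate hypothesis should force the relevant Grothendieck-ring class to be a Laurent (or at least rational) function of $\L$, and the triviality of $H^*(B_kG,\Q_\ell)$ should then pin it down to $1$. However, several steps in your argument have real gaps, and the overall architecture is different from the paper's in ways that matter.

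First, $K_0$ of the \emph{big} category $\on{DMT}(k,\Z_\ell)$ is not $\Z[\L^{\pm 1}]$; since $\on{DMT}(k,\Z_\ell)$ has arbitrary coproducts, its triangulated $K_0$ is killed by the Eilenberg swindle. You need to work with the small geometric subcategory $\on{DMT}_{\on{gm}}(k,\Z_\ell)$. But then a second problem arises: $B_kG$ is a stack, not a variety, and $M^c(B_kG)$ (defined as a colimit \`a la Totaro) is not a geometric motive, so it does not directly furnish a class in $K_0^{\on{tr}}(\on{DMT}_{\on{gm}}(k,\Z_\ell))$. The paper resolves this by the key reduction you are missing: choose an embedding $G\hookrightarrow\on{GL}_n$, use that $\on{GL}_n/G$ is a genuine variety which is $\Z_\ell$-mixed Tate by Totaro, invoke Totaro's Theorem 7.2(4) to put $M^c(\on{GL}_n/G)$ in $\on{DMT}_{\on{gm}}$, apply \Cref{polynomial} to see $\nu_\ell(\{\on{GL}_n/G\})\in\Z[\L^{\pm 1}]$, and then combine with the explicit formula $\{B_k\on{GL}_n\}=\prod_{h=0}^{n-1}(\L^n-\L^h)^{-1}$ to conclude that $\nu_\ell(\{B_kG\})$ is a rational function of $\L$.

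Second, and more seriously, your ``central computation'' of the coefficients $a_m$ uses only the rational weight-graded cohomology $\on{gr}^W_\bullet H^\bullet_c(B_kG,\Q_\ell)$, which determines the image of the class under the \emph{rational} Euler characteristic but not a priori the integral class. You explicitly flag that ``the integrality is genuinely subtle,'' yet your formula for $a_m$ does not engage with it --- this is an internal inconsistency in the proposal. What makes the rational computation conclusive in the paper is the following: $\nu_\ell(\{B_kG\})$ is shown to lie in the image of the homomorphism $\varphi:\Phi^{-1}_t\Z[t]\to\Phi^{-1}_\L K_0(\on{Mot}(k;\Z_\ell))$, $t\mapsto\L$, and by Bergh's result the composite of $\varphi$ with the rational Euler characteristic $\chi_c$ is \emph{injective}. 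Combined with Ekedahl's identity $\chi_c(\{B_kG\})=1$, this forces $\nu_\ell(\{B_kG\})=1$. Without this injectivity, knowing the rational class is $1$ does not let you conclude.

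Finally, the ``translation step'' that you regard as the main obstacle is, in the paper's framework, the easy part: the invariants $H^i_\ell$ are \emph{constructed} to factor through $\Phi^{-1}_\L K_0(\on{Mot}(k;\Z_\ell))\cong\Phi^{-1}_\L K_0^{\on{tr}}(\on{DM}_{\on{gm}}(k;\Z_\ell))$ via $\nu_\ell$ and the realization $\rho_\ell$, so once $\nu_\ell(\{B_kG\})=1$ the equalities $H^0_\ell(\{B_kG\})=[\Z_\ell]$ and $H^i_\ell(\{B_kG\})=0$ for $i\neq0$ follow by a one-line unwinding. The SNC-compactification and weight-spectral-sequence machinery you sketch would indeed be needed if you worked with Ekedahl's original Bittner-based definition of the invariants, but the whole point of \Cref{euler-properties} is to bypass that.
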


As an application of \Cref{ekedahl-inv}, we prove a conjecture of I. Martino. Let $k=\C$, and for every prime $p\geq 3$ let $G_p$ be the Heisenberg group of order $p^3$. The group $G_3$ admits a faithful $3$-dimensional representation, hence $\set{B_{\C}G_3}=1$ in $K_0(\on{Stacks}_{\C})$ by \cite[Theorem 2.4]{martino2016ekedahl}. In \cite[Theorem 4.4]{martino2016ekedahl}, Martino proved that $H^i(\set{B_{\C}G_5})=0$ for every $i\neq 0$, with the help of a computer calculation. He conjectured that the same should be true for $G_p$, where $p$ is an arbitrary odd prime number; see \cite[Conjecture, p. 1296]{martino2016ekedahl}. 

\begin{cor}\label{martino-cor}
	Let $p$ be a prime number, and let $G$ be a group of order $p^3$. Then $H^i(\set{B_{\C}G})=0$ for every $i\neq 0$.
\end{cor}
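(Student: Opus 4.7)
The plan is direct: by \Cref{ekedahl-inv}(b), it suffices to show that $B_\C G$ has the mixed Tate property for every group $G$ of order $p^3$. Once this is in place, the vanishing $H^i(\set{B_{\C}G})=0$ for $i \neq 0$ follows at once (and in fact one also obtains the stronger statement $H^0(\set{B_{\C}G})=[\Z]$ for free).

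The heart of the argument is therefore to establish the mixed Tate property in all cases. I would invoke Totaro's work on the motive of a classifying space, where it is shown that $B_{\C}G$ has the Chow-K\"unneth property, and hence a fortiori the mixed Tate property, for every finite $p$-group of order at most $p^4$. If one prefers a case-by-case approach, one splits according to the classification of groups of order $p^3$: the three abelian groups $\Z/p^3$, $\Z/p^2\times\Z/p$, $(\Z/p)^3$, where mixed Tate is immediate from the K\"unneth-style decomposition of $B_{\C}G$ together with the known mixed Tate property of $B_{\C}\Z/n$; the two non-abelian groups for odd $p$, namely the Heisenberg group of exponent $p$ and the semidirect product $\Z/p^2\rtimes\Z/p$; and the two non-abelian groups for $p=2$, namely the dihedral group $D_4$ and the quaternion group $Q_8$. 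In each case the mixed Tate property is obtained by an explicit stratification of a faithful complex representation $V$ of $G$ into locally closed pieces whose quotients are mixed Tate, and then appealing to the compatibility of the mixed Tate property with such stratifications.

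The main obstacle in the case-by-case route is the non-abelian case for general odd $p$, which is precisely the setting where Martino's direct computation of $H^i(\set{B_{\C}G_p})$ required computer assistance for $p=5$; a uniform description of the motive of the classifying stack of the Heisenberg group for all odd $p$ is not entirely trivial. By contrast, if Totaro's Chow-K\"unneth theorem for $p$-groups of order at most $p^4$ is quoted as a black box, the entire corollary collapses to a single application of \Cref{ekedahl-inv}(b), and the only substantive work is locating and verifying the correct statement in the literature.
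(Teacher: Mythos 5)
Your overall structure is exactly the paper's: reduce to the mixed Tate property of $B_{\C}G$ and then invoke Theorem~\ref{ekedahl-inv}(b). The issue is the justification of the mixed Tate step. The paper cites P\u{a}durariu \cite{padurariu2015groups}, whose entire point is precisely to establish that every group of order $p^3$ is mixed Tate; this is not a result proved in Totaro's paper \cite{totaro2016motive} (had it been, P\u{a}durariu's paper would be redundant, and this paper would cite Totaro directly). So your preferred "black box" citation to Totaro for all $p$-groups of order at most $p^4$ is a misattribution, and you should replace it with a citation to \cite[Theorem 3.1]{padurariu2015groups}. Note also that the logical direction in your "a fortiori" is suspect: in Totaro's hierarchy of properties, the mixed Tate property is used to deduce the Chow--K\"unneth property for classifying spaces, not the reverse, so even a correct citation to a Chow--K\"unneth result would not immediately yield mixed Tate. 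Finally, your fallback case-by-case route is, as you yourself observe, genuinely incomplete in the non-abelian odd-$p$ cases; that gap is exactly what P\u{a}durariu's argument fills, and the whole point of Corollary~\ref{martino-cor} is that Theorem~\ref{ekedahl-inv} lets one bypass the computer-assisted computation Martino needed for $p=5$. With the citation corrected, your proof matches the paper's.
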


Indeed, by a result of T. P{\u{a}}durariu \cite[Theorem 3.1]{padurariu2015groups}, if $G$ is a group of order $p^3$, then $B_{\C}G$ is mixed Tate. Now \Cref{martino-cor} follows from \Cref{ekedahl-inv}.

The main ingredient in our proof of \Cref{ekedahl-inv} is a new construction of the invariants $H^i_{\ell}(-)$. This alternative definition does not rely on resolution of singularities, but uses Voevodsky's triangulated category of motives, and a certain weight structure defined on it by M. Bondarko. Due to recent advances of Bondarko in the theory of Voevodsky motives, made possible by O. Gabber's prime-to-$\ell$ resolution of singularities \cite[Th\'eor\`eme 3(1) p. vi]{illusie2014travaux}, this definition also works when $k$ is of positive characteristic $p$, as long as $\ell\neq p$. 

Every element $\alpha\in K_0(\on{Mod}_{\Z_{\ell}})$ can be written in a unique way as $\alpha=n[\Z_{\ell}]+\beta$, where $n\in\Z$ and $\beta$ is a linear combination of classes of finite cyclic $\ell$-groups (a similar definition applies to elements of $K_0(\on{Mod}_{\Z})$. We say that $\alpha$ is torsion if $n=0$. In characteristic zero, Ekedahl showed that $H^{i}(\set{B_{\C}G})$ is torsion for every $i\neq 0$, $H^{i}(\set{B_{\C}G})=0$ for $i>0$ or $i=-1$, $H^0(\set{B_{\C}G})=[\Z]$, and $H^{-2}(\set{B_{\C}G})=[\on{Hom}(\on{Br}_{\on{nr}}(\C(V^G)),\Q/\Z)]$, where $V$ is a faithful complex representation of $G$; see \cite[Theorem 5.1]{ekedahl2009geometric}. We prove an analog of Ekedahl's Theorem in positive characteristic.

\begin{thm}\label{calculation}
	Let $G$ be a finite group. There exists a positive integer $N$ such that for every prime $p\geq N$, for every algebraically closed field $k$ of characteristic $p$, and every prime $\ell\neq p$, $H^i_{\ell}(\set{B_kG})$ is torsion for every $i\neq 0$ and: 
	\[
	H^i_{\ell}(\set{B_kG})=
	\begin{cases}
	0, &i>0, \\
	[\Z_{\ell}], &i=0,\\
	0, & i=-1,\\
	[\on{Hom}(\on{Br}_{\on{nr}}({k}(V)^G)\set{\ell},\Q_{\ell}/\Z_{\ell})], &i=-2.
	\end{cases}
	\]
	Here $V$ is a faithful $G$-representation over $k$.
\end{thm}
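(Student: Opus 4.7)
The plan is to adapt Ekedahl's proof of the characteristic zero analogue \cite[Theorem 5.1]{ekedahl2009geometric} using the motive-theoretic construction of $H^i_\ell$ developed earlier in the paper, which is valid whenever $\ell \neq \on{char}(k)$. Choose $N > |G|$ so that $p \nmid |H|$ for every subgroup $H \leq G$, and fix a faithful $G$-representation $V$ over $k$ of dimension $n$. Let $S \subset V$ be the closed locus with non-trivial stabilizer and set $U := V \setminus S$; since $p \nmid |G|$, the quotient $U/G$ is a smooth $n$-dimensional $k$-scheme.

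Since $[V/G] \to B_kG$ is a rank-$n$ vector bundle, $\set{B_kG} = \set{[V/G]}/\L^n$ in $K_0(\on{Stacks}_k)$, and the decomposition $V = U \sqcup S$ gives
\[
\set{B_kG} = \frac{\set{U/G}}{\L^n} + \frac{\set{[S/G]}}{\L^n}.
\]
A stratification of $S$ by stabilizer type rewrites $\set{[S/G]}/\L^n$ as a $\Z[\L^{\pm 1}]$-linear combination of classes $\set{B_kH}$ for proper subgroups $H < G$. By induction on $|G|$ the theorem may be assumed known for all such $H$, reducing the claim to an analysis of $\set{U/G}/\L^n$.

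The motivic construction identifies $H^i_\ell(\set{U/G}/\L^n)$ with the compact-support \'etale cohomology $H^{i+2n}_c(U/G, \Z_\ell)$, and Poincar\'e duality converts this to $H^{-i}(U/G, \Z_\ell(n))^\vee$. Since $V \cong \A^n_k$ is $\ell$-acyclic, purity yields $H^j(U, \Z_\ell) = 0$ for $0 < j < 2\on{codim}_V(S) - 1$, and the Hochschild--Serre spectral sequence for the free quotient $U \to U/G$ then gives $H^j(U/G, \Z_\ell) \cong H^j(G, \Z_\ell)$ in the same range. Replacing $V$ by $V \oplus V^{\oplus r}$ for $r \gg 0$, which does not alter $\set{B_kG}$, drives $\on{codim}_V(S) \to \infty$, so in each fixed degree
\[
H^i_\ell(\set{B_kG}) \cong H^{-i}(G, \Z_\ell(n))^\vee.
\]
This immediately gives $H^0_\ell(\set{B_kG}) = [\Z_\ell]$, $H^{-1}_\ell(\set{B_kG}) = \on{Hom}(G, \Z_\ell)^\vee = 0$ since $G$ is finite, and $H^i_\ell(\set{B_kG}) = 0$ for $i > 0$ (combining with the induction to cancel the error terms). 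For $i = -2$, Bogomolov's theorem on the unramified Brauer group, valid in every characteristic coprime to $|G|$, identifies $H^2(G, \Z_\ell(1))^\vee$ with $[\on{Hom}(\on{Br}_{\on{nr}}(k(V)^G)\set{\ell}, \Q_\ell/\Z_\ell)]$. Torsionness of $H^i_\ell(\set{B_kG})$ for $i \neq 0$ follows from the finiteness of $H^j(G, \Z_\ell)$ in $j > 0$.

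I expect the main obstacle to be matching the inductive contributions from $\set{[S/G]}$ with the Hochschild--Serre computation on $\set{U/G}$ degree by degree, so that the error terms really do cancel: in characteristic zero this is handled via Bittner's presentation and the weight filtration on mixed Hodge structures, whereas here it must be redone purely in the $\ell$-adic / motivic setting. A secondary technical issue is verifying Bogomolov's formula in positive characteristic with the correct Tate twist, so that the usual $H^2(G, k^\times)$ statement transfers cleanly to a statement about $H^2(G, \Q_\ell/\Z_\ell(1))$.
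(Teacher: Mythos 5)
The central step of your argument, the identification
\[
H^i_\ell\bigl(\set{U/G}/\L^n\bigr)\;\cong\;H^{i+2n}_c(U/G,\Z_\ell),
\]
is not valid, and this is a fatal gap rather than a fixable detail. The homomorphism $H^i_\ell$ is defined by factoring through Chow motives: $\chi_\ell=\rho_\ell\circ\nu_\ell$ sends $\set{X}$, for $X$ smooth \emph{projective}, to the formal complex $(H^*_{\text{\'et}}(\cl X,\Z_\ell),0)$, and $h^i_\ell$ then reads off the $i$-th term. For a smooth non-proper variety such as $U/G$, the construction is forced to first express $\set{U/G}$ as a $\Z$-linear combination of classes of smooth projective varieties and then apply the above. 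The alternating sum $\sum(-1)^i H^i_\ell$ does agree with the compactly-supported Euler characteristic, but the individual invariants $H^i_\ell(\set{U/G}/\L^n)$ are emphatically not $H^{i+2n}_c(U/G,\Z_\ell)$ -- this distinction is precisely what makes the $H^i_\ell$ interesting and nontrivial. Consequently the Poincar\'e duality, purity, and Hochschild--Serre steps that follow have no purchase: you are computing the wrong object.

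This error also propagates to the $i=-2$ case: $H^2(G,\Z_\ell(1))^\vee$ is not $\on{Hom}(\on{Br}_{\on{nr}}(k(V)^G)\set{\ell},\Q_\ell/\Z_\ell)$. Bogomolov's theorem identifies $\on{Br}_{\on{nr}}(k(V)^G)$ with the Bogomolov multiplier $B_0(G)$, a (generally proper) subgroup of $H^2(G,\Q/\Z)$; your formula overshoots. In Ekedahl's characteristic-zero argument the Brauer group appears not through group cohomology of $G$ but through the Brauer group of a smooth projective birational model of $U/G$, via Artin's computation for separably unirational projective varieties. The paper's Lemma on $\ell$-adic cohomology plays exactly this role. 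You flag the need to replace Bittner in positive characteristic as a secondary concern, but it is actually the main content of the theorem: the paper circumvents the lack of resolution of singularities by doing the resolution over $\Q$, spreading out over $\Z[1/n]$, reducing modulo $p\gg 0$, and then transporting Ekedahl's characteristic-zero computation across to characteristic $p$ via proper and smooth base change. Your proposal offers no substitute for this step, and the constraint $p\ge N$ in the statement -- which your choice $N>|G|$ does not reproduce -- is exactly the footprint of that spreading-out argument.
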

As a corollary, we give the first examples of finite groups $G$ such that $\set{B_kG}\neq 1$ in $K_0(\on{Stacks}_k)$ in positive characteristic; see \Cref{saltman}.

The proof of \Cref{calculation} consists of a spreading-out argument. The hypothesis $p\geq N$ is a technical condition that seems necessary to make our proof work, and could be removed if we knew resolution of singularities in positive characteristic.

We give a brief summary of the content of each section of the paper. In \Cref{prelim} we discuss preliminaries about Grothendieck rings, and in \Cref{weights} we cover the preliminaries on weight structures that will be needed for the proof of \Cref{ekedahl-inv}. In \Cref{sec4} we give an alternative construction of Ekedahl's $H^i_{\ell}$, and in \Cref{simplified} we give a simplified variant of this constructions, together with an application to the calculation of of the motivic class of a linear algebraic group. \Cref{ekedahl-inv} is proved in \Cref{sec-ekedahl-inv}, and \Cref{calculation} in \Cref{sec-calculation}.

\subsection*{Notation}
If $k$ is a field, we denote by $\cl{k}$ an algebraic closure of $k$. The exponential characteristic of $k$ is $1$ if $\on{char}k=0$ and is $p$ if $\on{char}k=p>0$. A $k$-variety is a separated geometrically integral $k$-scheme of finite type. If $X$ is a $k$-variety, we set $\cl{X}:=X\times_k\cl{k}$.
If $\ell$ is a prime different from $\on{char}k$, we denote by $\Z_{\ell}$ the ring of $\ell$-adic integers, and by $H^i_{\text{\'et}}(X,\Z_\ell)$ the $i$-th group of $\ell$-adic cohomology on $X$. If $k=\C$, we write $H^i(X(\C),\Z)$ for the $i$-th singular cohomology group of $X(\C)$.

If $S$ is a commutative ring with identity, and $a\in S$, we define \[\Phi_a:=\set{a, a^n-1: n\geq 1}\c S,\] and we write $\Phi_a^{-1}S$ for the localization of $S$ at the multiplicative subset generated by $\Phi_a$. If  $a,a_1,\dots,a_n\in S$, we say that $a$ is a polynomial in $a_1,\dots,a_n$ if $a$ belongs to the image of the homomorphism $\Z[x_1,\dots,x_n]\to S$ given by $x_i\mapsto a_i$. 
	
\section{Preliminaries}\label{prelim}

\subsection{Grothendieck rings of varieties and motives}
Let $k$ be an arbitrary field. By definition, the Grothendieck ring of varieties $K_0(\on{Var}_{k})$ is the abelian group generated by isomorphism classes $\set{X}$ of $k$-schemes $X$ of finite type, modulo the relations $\set{X}=\set{Y}+\set{X\setminus Y}$ for every closed subscheme $Y\c X$. The product operation in $K_0(\on{Var}_k)$ is defined on generators by $\set{X}\cdot\set{Y}:=\set{X\times_k Y}$, and we have $1=\set{\on{Spec}k}$. We set $\L:=\set{\A_k^1}$. If $k$ is of characteristic zero, then $K_0(\on{Var}_k)$ admits the following presentation, due to F. Bittner.

\begin{thm}[Bittner]\label{bittner}
	Let $k$ be a field of characteristic zero. As an abelian group, $K_0(\on{Var}_k)$ is canonically isomorphic to the abelian group generated by isomorphism classes $\set{X}$ of smooth projective $k$-varieties $X$, modulo the relations $\set{\emptyset}=0$ and $\set{X}-\set{Y}=\set{\widetilde{X}}-\set{E}$, where $X$ is a smooth projective $k$-variety, $\widetilde{X}\to X$ is the blow-up at the smooth closed subvariety $Y\c E$, and $E$ is the exceptional divisor of the blow-up.
\end{thm}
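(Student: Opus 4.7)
My plan is to exhibit an explicit inverse to the natural map $\phi\colon K_0^{sm}\to K_0(\on{Var}_k)$, using Hironaka's resolution of singularities and the weak factorization theorem of Abramovich-Karu-Matsuki-W\l odarczyk. Here $K_0^{sm}$ denotes the abelian group with the presentation given in the statement, and $\phi$ sends a generator $[X]$ to $\set{X}$. First I would check that $\phi$ is well-defined: the blow-up relation holds in $K_0(\on{Var}_k)$ because the restriction $\widetilde{X}\setminus E\cong X\setminus Y$ gives $\set{\widetilde{X}}-\set{E}=\set{X\setminus Y}=\set{X}-\set{Y}$.

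Surjectivity of $\phi$ is then a direct consequence of resolution of singularities: every variety $Z$ admits a stratification into smooth quasi-projective strata, each of which has a smooth projective compactification with simple normal crossings boundary $D=D_1\cup\cdots\cup D_n$, and the scissor relations yield $\set{U}=\sum_J(-1)^{|J|}\set{D_J}$, exhibiting every class of $K_0(\on{Var}_k)$ as a $\Z$-linear combination of classes of smooth projective varieties.

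For injectivity I would construct the inverse $\psi\colon K_0(\on{Var}_k)\to K_0^{sm}$ as follows. For a smooth $k$-variety $U$, choose a smooth projective compactification $\cl{U}\supseteq U$ whose boundary $D=D_1\cup\cdots\cup D_n$ is a simple normal crossings divisor, and define
\[
e(U):=\sum_{J\c\set{1,\dots,n}}(-1)^{|J|}[D_J]\in K_0^{sm},
\]
with $D_J:=\bigcap_{i\in J}D_i$ and $D_{\emptyset}:=\cl{U}$. The crucial step is to prove $e(U)$ is independent of the compactification: any two choices are linked, by weak factorization, through a sequence of blow-ups and blow-downs along smooth centers having normal crossings with the boundary, and I would verify by a combinatorial computation with the blow-up relation in $K_0^{sm}$ that $e(U)$ is preserved under each such elementary step. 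Given this invariance, I would extend $e$ to arbitrary varieties by noetherian induction on dimension: if $Z=U\sqcup Y$ with $U$ smooth dense open and $Y$ closed of lower dimension, set $\psi(\set{Z})=e(U)+\psi(\set{Y})$. Compatibility with the scissor relations follows from another application of the invariance of $e$, and the identity $\psi\circ\phi=\on{id}$ on generators is settled by taking the trivial compactification $\cl{X}=X$ when $X$ is already smooth projective, so $e(X)=[X]$.

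The main obstacle is unquestionably the invariance of $e(U)$ under change of compactification. This step is where both deep inputs, resolution of singularities (for the existence of compactifications) and weak factorization (for the comparison), are used, and it requires a careful inclusion-exclusion bookkeeping to match the stratum contributions before and after blowing up along a smooth center transverse to the boundary divisor; all other parts of the argument are essentially formal consequences.
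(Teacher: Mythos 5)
Your proposal is correct and reconstructs Bittner's original argument, which the paper simply cites without reproducing: well-definedness of $\phi$ via the blow-up scissor relation, surjectivity via resolution and inclusion-exclusion on simple normal crossings compactifications, and injectivity by exhibiting an inverse $\psi$ whose independence of the chosen compactification is the crux and rests on weak factorization. The additivity of $e$ across a smooth closed/open decomposition, which you sketch most lightly, is a separate lemma in Bittner's paper, but as you indicate it does reduce to the invariance statement together with inclusion-exclusion bookkeeping, so there is no real gap.
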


\begin{proof}
	See \cite[Theorem 3.1]{bittner2004universal}. 
\end{proof}

Following Ekedahl \cite{ekedahl2009grothendieck}, we define the Grothendieck ring of stacks $K_0(\on{Stacks}_k)$ as the abelian group generated by isomorphism classes $\set{\mc{X}}$ of algebraic stacks $\mc{X}$ with affine stabilizers and of finite type over $k$, modulo the relations $\set{\mc{X}}=\set{\mc{Y}}+\set{\mc{X}\setminus \mc{Y}}$ for every closed embedding $\mc{Y}\c \mc{X}$, and the relations $\set{\mc{E}}=\set{\A_k^r\times_k \mc{X}}$ for every vector bundle $\mc{E}\to\mc{X}$ of constant rank $r$. The product is defined on generators by $\set{\mc{X}}\cdot\set{\mc{Y}}:=\set{\mc{X}\times_k\mc{Y}}$, and we have $1=\set{\on{Spec}k}$. By \cite[Theorem 1.2]{ekedahl2009grothendieck}, the canonical ring homomorphism $K_0(\on{Var}_k)\to K_0(\on{Stacks}_k)$ induces an isomorphism
\[K_0(\on{Stacks}_k)\cong \Phi_{\L}^{-1}K_0(\on{Var}_k).\]

\begin{rmk}
Several definitions of a Grothendieck ring of algebraic stacks predate that of Ekedahl; see \cite{behrend2007motivic}, \cite{joyce2007motivic} and \cite{toen2005grothendieck}. 
\end{rmk}

There is a filtration $\on{Fil}^{\bullet}K_0(\on{Var}_k)$, such that for every integer $n$ the subgroup $\on{Fil}^nK_0(\on{Var}_k)$ is generated by elements of the form $\set{X}/\L^m$, where $X$ is a $k$-variety and $\on{dim}(X)-m\leq n$. We denote by $\hat{K}_0(\on{Var}_k)$ the completion of $K_0(\on{Var}_k)$ with respect to this filtration. Since the filtration is compatible with multiplication, $\hat{K}_0(\on{Var}_k)$ inherits a ring structure. For every $n\geq 1$, we have $(1-\L^n)\sum_{i\geq 0} \L^{ni}=1$ in $\hat{K}_0(\on{Var}_k)$. Therefore, we have canonical ring homomorphisms
\[K_0(\on{Var}_k)\to K_0(\on{Stacks}_k)\to \hat{K}_0(\on{Var}_k).\]

During the proof of \Cref{calculation}, we will use a generalization of $K_0(\on{Var}_k)$. If $S$ is a scheme, we define $K_0(\on{Sch}_S)$ as the abelian group generated by isomorphism classes $\set{X}$ of $S$-schemes $X$, modulo the relations $\set{X}=\set{Y}+\set{X\setminus Y}$ for every closed subscheme $Y\c X$. The fibered product over $S$ makes $K_0(\on{Sch}_S)$ into a commutative ring with identity. By definition, $K_0(\on{Var}_k)= K_0(\on{Sch}_k)$. For every morphism $f:T\to S$, we have an induced ring homomorphism $f^*:K_0(\on{Sch}_S)\to K_0(\on{Sch}_T)$. 

\begin{rmk}\label{smoothsuffice}
	Assume that $k$ is a perfect field. Denote by $K'_0(\on{Var}_k)$ the abelian group generated by classes $\set{X}$ of smooth $k$-schemes of finite type, modulo the relations $\set{X}=\set{Y}+\set{X\setminus Y}$ for every closed embedding of smooth $k$-schemes $Y\hookrightarrow X$. The fibered product over $k$ makes $K'_0(\on{Var}_k)$ into a commutative ring with identity. There is a canonical ring homomorphism $K'_0(\on{Var}_k)\to K_0(\on{Var}_k)$. Using the defining presentations of $K'_0(\on{Var}_k)$ and $K_0(\on{Var}_k)$, it is easy to show that this homomorphism is injective. It is also surjective, because every reduced $k$-scheme admits a locally-closed stratification with finitely many strata and such that the every stratum is smooth over $k$; here we use that $k$ is perfect. 
\end{rmk}

\subsection{Grothendieck rings of additive categories}
We recall some standard definitions; see \cite[3.2.1]{gillet1996descent} or \cite[5.3]{bondarko2010weight}. 

If $\mc{A}$ is an additive category, we define the Grothendieck group of $\mc{A}$ as the abelian group $K_0(\mc{A})$ generated by isomorphism classes $[A]$ of objects $A$ of $\mc{A}$ modulo the relations $[A\oplus B]=[A]+[B]$ for every $A,B\in \mc{A}$. If $\mc{A}$ is a monoidal additive category, the tensor product on $\mc{A}$ induces a multiplication on $K_0(\mc{A})$, which makes $K_0(\mc{A})$ into a commutative ring with identity.

\begin{example}
	Let $R$ be a commutative ring with identity, and let $\on{Mod}_R$ denote the category of finitely generated $R$-modules. If $R$ is a principal ideal domain then, as an abelian group, $K_0(\on{Mod}_R)$ is freely generated by the classes of $R$ and $R/P^n$, where $P\c R$ is a non-zero prime ideal and $n\geq 1$; see \cite[Proposition 3.3(i)]{ekedahl2009grothendieck}.
\end{example}
Let $\mc{C}$ be a triangulated category. Recall that this means that $\mc{C}$ is an additive category together with a translation (or suspension) functor $X\mapsto X[1]$, and a class of distinguished triangles; see \cite[IV.1]{gelfand2013methods}. We define the Grothendieck group $K_0^{\on{tr}}(\mc{C})$ of $\mc{C}$ as the abelian group generated by isomorphism classes $[X]$ of objects $X\in \mc{C}$, modulo the relations $[X]=[Y]+[Z]$ for every distinguished triangle \[Y\to X\to Z\to Y[1].\] Of course, $K_0^{\on{tr}}(\mc{C})$ is in general different from $K_0(\mc{C})$. 

If $\mc{C}$ is a tensor triangulated category, $K_0^{\on{tr}}(\mc{C})$ has a natural ring structure.

\subsection{The derived category of representations of a profinite group}
Let $\mc{G}$ be a profinite group, and let $R$ be a topological commutative ring with identity. If $\ell$ is a prime number, we will use $R=\Z,\Z/\ell$ with the discrete topology, or $\Z_{\ell},\Q_\ell$ with the $\ell$-adic topology. We denote by $\on{Rep}_{\mc{G},R}$ the category whose objects are finitely generated continuous $R$-modules, equipped with a continuous $R$-linear $\mc{G}$-action. 

We denote by $D^b(\on{Rep}_{\mc{G},R})$ the bounded derived category of $\on{Rep}_{\mc{G},R}$. We define \[L_0(\on{Rep}_{\mc{G},R}):=K_0(D^b(\on{Rep}_{\mc{G},R})).\] This is the Grothendieck ring of $D^b(\on{Rep}_{\mc{G},R})$, viewed as an additive category. The derived tensor product in $D^b(\on{Rep}_{\mc{G},R})$ endows $L_0(\on{Rep}_{\mc{G},R})$ with the structure of a commutative ring with identity: $[M]\cdot [N]:=[M\otimes_R^{L}N]$.

There is a degree filtration $\on{Fil}^{\bullet}L_0(\on{Rep}_{\mc{G},R})$ on $L_0(\on{Rep}_{\mc{G},R})$, where for every integer $n$, the subgroup $\on{Fil}^{n}L_0(\on{Rep}_{\mc{G},R})$ consists of complexes concentrated in degrees $\leq n$. We let $\hat{L}_0(\on{Rep}_{\mc{G},R})$ be the completion of $L_0(\on{Rep}_{\mc{G},R})$ with respect to this filtration. Since the filtration is stable under multiplication, there is a canonical ring homomorphism \[L_0(\on{Rep}_{\mc{G},R})\to \hat{L}_0(\on{Rep}_{\mc{G},R}).\]

For every integer $i$, we have a group homomorphism \[h^i_R:L_0(\on{Rep}_{\mc{G},R})\to K_0(\on{Rep}_{\mc{G},R})\] given by $[(M,d)]\mapsto [\on{Ker}d^i/\on{Im}d^{i+1}]$.


\section{Preliminaries on weight structures}\label{weights}
In the proof of \Cref{ekedahl-inv}, we will make use of the theory of weight structures on triangulated categories. In this section, we provide the basic definitions and references that are needed to follow our argument. The material of this section will only be used in \Cref{sec-ekedahl-inv}.

\subsection{Idempotents and retracts} Let $\mc{A}$ be an additive category. Then $\mc{A}$ embeds into its \emph{idempotent completion} (or Karoubi envelope) $\on{Kar}(\mc{A})$. By definition, $\on{Kar}(\mc{A})$ is the universal enlargement of $\mc{A}$ in which every idempotent splits. A concrete description of $\on{Kar}(\mc{A})$ is as follows: the objects of $\on{Kar}(\mc{A})$ are pairs $(A,p)$, where $A$ is an object of $\mc{A}$, and $p:A\to A$ is a projector, that is, $p^2=p$. Homomorphisms $(A,p)\to (B,q)$ are given by homomorphisms $\varphi:A\to B$ in $\mc{A}$ such that $\varphi\circ p=\varphi=q\circ \varphi$. We say that $\mc{A}$ is \emph{idempotent complete} if every idempotent endomorphism $p:A\to A$ in $\mc{A}$ splits, or equivalently if the canonical functor $\mc{A}\to \on{Kar}(\mc{A})$ is an equivalence. 

Recall that if $\mc{A}$ is abelian or triangulated, every retract is a direct summand; see \cite[Lemma 2.1.5]{neeman2001triangulated} for the triangulated case. Let $\mc{B}$ be an additive subcategory of the additive category $\mc{A}$. We say that $\mc{B}$ is \emph{retraction-closed} (or Karoubi-closed) in $\mc{A}$ if $\mc{B}$ is closed under retracts in $\mc{A}$: if $X\in \mc{A}$ and $\on{id}_X$ factors through $Y\in \mc{B}$, then $X\in \mc{B}$. For every additive subcategory $\mc{B}$ of $\mc{A}$, we define $\on{Kar}_{\mc{A}}(\mc{B})$ as the smallest retraction-closed subcategory of $\mc{A}$ containing $\mc{B}$.

\subsection{Triangulated categories} 
Let $\mc{C}$ be a triangulated category, and let $\mc{B}$ be a full additive subcategory of $\mc{C}$. Recall that $\mc{B}$ is \emph{strictly full} if it is full and closed under isomorphisms in $\mc{C}$. We say that $\mc{B}$ is \emph{thick} if it is closed under direct summands; see \cite[Definition 2.1.6]{neeman2001triangulated} or \cite[p. 17]{totaro2016motive}. As we have mentioned, all retracts in $\mc{C}$ are direct summands, hence $\mc{B}$ is thick if and only if it is retraction-closed. If $\mc{B}$ is thick, then it is strictly full.

\begin{lemma}\label{idem-thick}
	Let $\mc{B}$ be a strictly full triangulated subcategory of $\mc{C}$. If $\mc{B}$ is idempotent complete, then $\mc{B}$ is thick.
\end{lemma}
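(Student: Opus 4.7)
The plan is to unpack the two hypotheses, translate them into diagrams involving idempotents, and then realize the putative retract of $\mathcal{B}$ as literally the image of an idempotent that already splits inside $\mathcal{B}$.

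More concretely, I would start by recalling what needs to be shown: since the paper already notes that in a triangulated category every retract is a direct summand, being thick is equivalent to being retraction-closed. So fix $X\in \mathcal{C}$ together with $Y\in \mathcal{B}$ and morphisms $i\colon X\to Y$, $r\colon Y\to X$ in $\mathcal{C}$ with $r\circ i=\on{id}_X$; the goal is to conclude $X\in \mathcal{B}$.

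Next, I would form the endomorphism $p:=i\circ r\colon Y\to Y$ and check the immediate computation $p^2=i\circ(r\circ i)\circ r=i\circ r=p$, so $p$ is an idempotent on an object of $\mathcal{B}$. Since $\mathcal{B}$ is idempotent complete, $p$ admits a splitting \emph{inside} $\mathcal{B}$: there exist $X'\in \mathcal{B}$ and morphisms $i'\colon X'\to Y$, $r'\colon Y\to X'$ in $\mathcal{B}$ with $r'\circ i'=\on{id}_{X'}$ and $i'\circ r'=p$.

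The final step is to show $X\cong X'$ in $\mathcal{C}$: consider the morphisms $\alpha:=r'\circ i\colon X\to X'$ and $\beta:=r\circ i'\colon X'\to X$. A direct diagram chase gives $\beta\circ\alpha=r\circ(i'\circ r')\circ i=r\circ p\circ i=r\circ i\circ r\circ i=\on{id}_X$, and symmetrically $\alpha\circ\beta=r'\circ(i\circ r)\circ i'\circ$ after inserting $p=i\circ r$ viewed through the other splitting; more cleanly, $\alpha\circ\beta=r'\circ i\circ r\circ i'=r'\circ p\circ i'=r'\circ i'\circ r'\circ i'=\on{id}_{X'}$. Hence $\alpha$ is an isomorphism in $\mathcal{C}$, and because $\mathcal{B}$ is strictly full (closed under isomorphism in $\mathcal{C}$) and already contains $X'$, it contains $X$ as well.

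There is no real obstacle here; the only subtlety worth flagging is the bookkeeping distinction between idempotent completeness of $\mathcal{B}$ on its own (which lets us split $p$ inside $\mathcal{B}$) and the strict fullness of $\mathcal{B}$ inside $\mathcal{C}$ (which lets us transport the resulting object back along an isomorphism in the ambient category). The argument never invokes the triangulated structure beyond the fact, cited from \cite[Lemma 2.1.5]{neeman2001triangulated}, that retracts are direct summands; in fact the same proof shows the analogous statement for any strictly full additive subcategory of any additive category.
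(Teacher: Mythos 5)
Your proof is correct and takes essentially the same route as the paper: reduce thickness to retraction-closedness via Neeman's lemma, split the idempotent inside $\mc{B}$ using idempotent completeness, identify the split piece with the original retract, and conclude by strict fullness. If anything, your explicit verification that $\alpha=r'\circ i$ and $\beta=r\circ i'$ are mutually inverse is cleaner than the paper's appeal to ``the universal property of kernel and image,'' which is slightly loose language in a triangulated (non-abelian) category.
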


\begin{proof}
	Consider a decomposition $X=Y\oplus Z$ in $\mc{C}$, where $X\in\mc{B}$. Let $p:X\to Y\to X$ be the idempotent corresponding to the projection onto $Y$. Since $\mc{B}$ is idempotent complete and $p^2=p$, we have a decomposition $X=X_1\oplus X_2$ in $\mc{B}$ such that $X_1=\on{Im}p$ and $X_2=\on{Ker}p$ in $\mc{B}$. The inclusion of $\mc{B}$ in $\mc{C}$ is an additive functor, hence it preserves direct sums. It follows that $X=X_1\oplus X_2$ in $\mc{C}$, hence $X_1=\on{Im}p$ and $X_2=\on{Ker}p$ in $\mc{C}$. By the universal property of kernel and image, we deduce that $X_1\cong Y$ and $X_2\cong Z$. Since $\mc{B}$ is strictly full, we conclude that $Y,Z\in \mc{B}$, as desired.
\end{proof}

Let $\mc{C}$ be a triangulated category, and let $S$ be a collection of objects of $\mc{C}$. There exists a smallest strictly full triangulated subcategory $\mc{C}_S$ of $\mc{C}$ containing $S$. One can explicitly describe $\mc{C}_S$ as follows (see \cite[Tag 09SI]{stacks-project} for the case when $S$ consists of a single element). Let $\ang{S}_1$ be the strictly full subcategory of $\mc{C}$ consisting of objects in $\mc{C}$ isomorphic to direct summands of \emph{finite} direct sums \[\oplus_{i,j}E_i[n_j]\] where $E_i\in S$ and $n_j\in \Z$. For $n\geq 2$, define $\ang{S}_n$ inductively as the full subcategory of $\mc{C}$ whose objects are isomorphic to direct summands of objects $X$ fitting into a distinguished triangle \[A\to X\to B\to A[1],\] where $A$ is an object of $\ang{S}_1$ and $B$ is an object of $\ang{S}_{n-1}$. 

\begin{lemma}
	Let $\mc{C}$ be a triangulated category, let $S$ be a collection of objects of $\mc{C}$, and for every $n\geq 1$ define $\ang{S}_n$ as above. Then the smallest strictly full triangulated subcategory $\mc{C}_S$ of $\mc{C}$ is the increasing union of the $\ang{S}_n$.
\end{lemma}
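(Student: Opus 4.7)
The plan is to set $\mathcal{U} := \bigcup_{n\geq 1} \langle S\rangle_n$ and prove that $\mathcal{U}$ coincides with $\mathcal{C}_S$. The proof splits into three steps: showing $(\langle S\rangle_n)$ is increasing, showing $\mathcal{U}$ is itself a strictly full triangulated subcategory of $\mathcal{C}$ containing $S$ (closed under shifts, cones, and direct summands), and finally the minimality direction $\mathcal{C}_S \subseteq \mathcal{U}$.

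For the increasing property: given $X \in \langle S\rangle_n$, the trivial distinguished triangle $0 \to X \xrightarrow{\on{id}} X \to 0[1]$, together with the fact that $0 \in \langle S\rangle_1$ as an empty direct sum, realizes $X$ in $\langle S\rangle_{n+1}$. A routine induction on $n$, using that rotating a distinguished triangle produces a distinguished triangle and that shifts commute with direct summands, shows that each $\langle S\rangle_n$ is stable under every shift $[k]$.

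The key step is closure of $\mathcal{U}$ under extensions: if $X \to Y \to Z \to X[1]$ is distinguished with $X \in \langle S\rangle_m$ and $Z \in \langle S\rangle_n$, then $Y \in \langle S\rangle_{m+n}$. To manage the direct summands appearing in the recursive definition, I would introduce auxiliary categories $\langle S\rangle^{\circ}_n$ defined by the same recursion but with the passage to direct summands omitted at each step, so that $\langle S\rangle_n = \on{Kar}_{\mathcal{C}}(\langle S\rangle^{\circ}_n)$. By induction on $m$, using the octahedral axiom applied to the composition of the extension witnessing $X \in \langle S\rangle^{\circ}_m$ with the given map $X \to Y$, one obtains $\langle S\rangle^{\circ}_m \ast \langle S\rangle^{\circ}_n \subseteq \langle S\rangle^{\circ}_{m+n}$, where $\ast$ denotes the operation of forming extensions. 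For the general case, write $X$ as a direct summand of some $\widetilde{X} \in \langle S\rangle^{\circ}_m$ and $Z$ as a direct summand of some $\widetilde{Z} \in \langle S\rangle^{\circ}_n$, then add trivial triangles to form a distinguished triangle $\widetilde{X} \to \widetilde{Y} \to \widetilde{Z} \to \widetilde{X}[1]$ in which $Y$ appears as a direct summand of $\widetilde{Y} \in \langle S\rangle^{\circ}_{m+n}$; hence $Y \in \langle S\rangle_{m+n}$. Closure of $\mathcal{U}$ under cones then follows from closure under shifts and extensions, via the rotated triangle $Y \to \on{cone}(f) \to X[1] \to Y[1]$.

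The main obstacle is the interaction of the direct summands in the recursive definition with the octahedral argument; the device of working first with the summand-free categories $\langle S\rangle^{\circ}_n$, where the octahedral step is transparent, and passing to Karoubi closures only at the end, cleanly isolates the bookkeeping from the triangulated manipulations. The remaining inclusion $\mathcal{C}_S \subseteq \mathcal{U}$ is then an easy induction on $n$: any strictly full triangulated subcategory of $\mathcal{C}$ containing $S$ must contain every $\langle S\rangle_n$, by closure under shifts, cones, and direct summands.
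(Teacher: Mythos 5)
The paper's ``proof'' of this lemma is nothing more than a citation to the Stacks Project, Tag~0ATG (together with the observation that ``saturated'' in the Stacks Project means the same as ``thick'' here); your proposal, by contrast, reproduces the underlying argument from scratch. The strategy you use --- introducing summand-free auxiliary categories $\langle S\rangle_n^\circ$, proving $\langle S\rangle_m^\circ\ast\langle S\rangle_n^\circ\subseteq\langle S\rangle_{m+n}^\circ$ via the octahedral axiom, and recovering $\langle S\rangle_n$ as the retraction closure of $\langle S\rangle_n^\circ$ by padding triangles with trivial summands --- is exactly the Stacks Project proof, so the approaches are the same in substance.

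There is, however, one genuine gap, and it is precisely the point that the paper's terse citation glosses over. Your minimality step reads: ``any strictly full triangulated subcategory of $\mathcal{C}$ containing $S$ must contain every $\langle S\rangle_n$, by closure under shifts, cones, and direct summands.'' But a strictly full triangulated subcategory is \emph{not} in general closed under direct summands --- that is exactly what the paper earlier singles out as the extra condition of being \emph{thick}. Since each $\langle S\rangle_n$ is by construction closed under passage to direct summands, the union $\mathcal{U}=\bigcup_n\langle S\rangle_n$ is thick, and in general it is strictly larger than the smallest strictly full triangulated subcategory $\mathcal{C}_S$ containing $S$ (for instance, by Thomason's classification of dense triangulated subcategories of perfect complexes, the smallest strictly full triangulated subcategory of $D^{\mathrm{perf}}(\mathbb{P}^1)$ containing $\mathcal{O}$ does not contain $\mathcal{O}(1)$, though the thick closure does). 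So the inclusion $\mathcal{U}\subseteq\mathcal{C}_S$ simply fails as stated.

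The remedy is to correct the statement, not the proof: $\mathcal{C}_S$ should be defined as the smallest \emph{thick} strictly full triangulated subcategory of $\mathcal{C}$ containing $S$, which is exactly how the Stacks Project phrases Tag~0ATG (``saturated''), and which is also what is actually needed in the later application in the proof of Proposition~\ref{polynomial}, where $\mathcal{C}_S$ is identified with the (thick) category $\on{DMT}_{\on{gm}}(k;R)$. With that one word added, your argument is correct, and the direction $\mathcal{C}_S\subseteq\mathcal{U}$ follows from the minimality of $\mathcal{C}_S$ once you have verified, as you do, that $\mathcal{U}$ is a thick strictly full triangulated subcategory containing $S$. (Your write-up also momentarily swaps the labels of the two inclusions --- the induction on $n$ you describe proves $\mathcal{U}\subseteq\mathcal{C}_S$, not $\mathcal{C}_S\subseteq\mathcal{U}$ --- but that is only a slip in exposition.) One small further point that deserves a line: the identity $\langle S\rangle_n=\on{Kar}_{\mathcal{C}}(\langle S\rangle_n^\circ)$, which you assert, itself requires an induction with the same triangle-padding trick, since the triangle witnessing $X\in\langle S\rangle_n$ may have terms that are only summands of objects of $\langle S\rangle_1^\circ$ and $\langle S\rangle_{n-1}^\circ$.
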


\begin{proof}
	See \cite[Tag 0ATG]{stacks-project}. There, the term \emph{saturated} is used; see \cite[Tag 05RB]{stacks-project} for the definition. It is immediate to see that a strictly full subcategory is thick if and only if it is saturated. 
\end{proof} 

We say that a triangulated subcategory $\mc{B}$ of $\mc{C}$ is \emph{negative} if for every two objects $X,Y\in \mc{B}$ and every $i>0$ we have $\on{Hom}(X,Y[i])=0$. 

The \emph{envelope} of $S$ is defined as the smallest full subcategory $\mc{B}$ of $\mc{C}$ that is extension-closed, that is, $0\in \mc{B}$, and for every distinguished triangle \[A\to B\to C\to A[1]\] in $\mc{C}$, if $A,C\in \mc{B}$ then $B\in \mc{B}$, and thick (or equivalently closed under retracts, by \cite[Lemma 2.1.5]{neeman2001triangulated}).

We say that a collection $S$ of objects of $\mc{C}$ \emph{densely generates} a triangulated subcategory $\mc{B}$ of $\mc{C}$ if $\mc{B}$ is the smallest triangulated subcategory of $\mc{C}$ closed under retractions (that is, containing all retracts of its objects in $\mc{C}$) whose set of objects contains $S$. Equivalently, $\mc{B}$ is the envelope of $\cup_{j\in \Z} S[j]$.

\subsection{Weight structures} A $t$-structure on a triangulated category $\mc{C}$ is a pair of strictly full subcategories $\mc{C}^{\leq 0}$ and $\mc{C}^{\geq 0}$ satisfying the axioms of \cite[IV.4, Definition 2]{gelfand2013methods}. The notion of a weight structure on $\mc{C}$ is a natural counterpart to the notion of $t$-structure. It was introduced independently by Bondarko \cite{bondarko2010weight} and Pauksztello \cite{pauksztello2008compact} (who used the name co-$t$-structure). Our main references for weight structures are \cite{bondarko2010weight} and \cite{bondarko2018constructing}.

The following is \cite[Definition 1.1.1]{bondarko2010weight}.
\begin{defin}
	A pair of strictly full subcategories $\mc{C}^{w\leq 0}$ and $\mc{C}^{w\geq 0}$ of $\mc{C}$ is said to define a \emph{weight structure} $w$ on $\mc{C}$ if the conditions below are satisfied. Let $\mc{C}^{w\leq n}:=\mc{C}^{w\leq 0}[-n]$ and  $\mc{C}^{w\geq n}:=\mc{C}^{w\geq 0}[-n]$.
	\begin{enumerate}
		\item $\mc{C}^{w\leq 0}$ and $\mc{C}^{w\geq 0}$ are additive and Karoubi-closed in $\mc{C}$.
		\item $\mc{C}^{w\geq 1}\c \mc{C}^{w\geq 0}, \mc{C}^{w\leq 0}\c \mc{C}^{w\leq 1}$.
		\item If $X\in \mc{C}^{w\leq 0}$ and $Y\in \mc{C}^{w\leq 1}$ we have $\on{Hom}(X,Y)=0$.
		\item For every $X\in \mc{C}$ there exist $A\in \mc{C}^{w\leq 0}, B\in \mc{C}^{w\geq 0}$, and a distinguished triangle
		\[B[-1]\to X\to A\to B.\] 
	\end{enumerate}	
	The \emph{heart} of the weight structure $w$ is the strictly full subcategory $\mc{C}^{w\leq 0}\cap \mc{C}^{w\geq 0}$.
\end{defin}

In this paper we make use of the ``cohomological convention"  $(\mc{C}^{w\leq 0},\mc{C}^{w\geq 0})$ for weight structures, as in \cite{bondarko2010weight}. In \cite{bondarko2018constructing}, the ``homological convention" $(\mc{C}_{w\leq 0},\mc{C}_{w\geq 0})$ is used. To switch between the two conventions, it suffices to remember that $\mc{C}^{w\leq 0}=\mc{C}_{w\geq 0}$ and $\mc{C}^{w\geq 0}=\mc{C}_{w\leq 0}$; see \cite[p. 41]{bondarko2018constructing}.


\begin{prop}[Bondarko]\label{bondarko1}
	Let $\mc{B}$ be a negative additive subcategory of $\mc{C}$, and assume that the objects of $\mc{B}$ densely generate $\mc{C}$. 
	\begin{enumerate}[label=(\alph*)]
		\item There exists a (unique) weight structure $w$ on $\mc{C}$ whose heart equals $\on{Kar}_{\mc{C}}(\mc{B})$. The categories $\mc{C}^{w\leq 0}$ and $\mc{C}^{w\geq 0}$ are the envelopes of $\cup_{j\geq 0}\mc{B}[j]$ and $\cup_{j\leq 0}\mc{B}[j]$, respectively.
		\item If $\mc{B}$ is idempotent complete, then the heart of $w$ is $\mc{B}$.
	\end{enumerate}
\end{prop}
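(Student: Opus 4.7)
The plan is to define $\mc{C}^{w\leq 0}$ and $\mc{C}^{w\geq 0}$ as the envelopes described in the statement and verify the four axioms. Axioms (1) and (2) are essentially formal: envelopes are by construction extension-closed and closed under retracts, and since they contain $0$ and split triangles are extensions, they are also additive; moreover $\mc{C}^{w\leq 0}[-1]$ is the envelope of $\cup_{j\geq -1}\mc{B}[j]$, which manifestly contains $\mc{C}^{w\leq 0}$, establishing the shift semi-invariance, and symmetrically on the other side.

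For the orthogonality axiom (3), I would first check the vanishing on generators: for $B[j]$ with $j\leq -1$ and $B'[j']$ with $j'\geq 0$, the identity $\on{Hom}(B[j],B'[j'])=\on{Hom}(B,B'[j'-j])$ with $j'-j\geq 1$ vanishes directly from the negativity of $\mc{B}$. I would then propagate this vanishing to the entire envelopes by a two-step argument: fix one of the two arguments and use the long exact Hom sequences attached to distinguished triangles together with the five-lemma to push the vanishing across extensions, then observe that it passes trivially to retracts.

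The main obstacle is axiom (4), the existence of a weight decomposition $Y[-1]\to X\to A\to Y$ with $A\in\mc{C}^{w\leq 0}$ and $Y\in\mc{C}^{w\geq 0}$ for every $X\in\mc{C}$. The strategy is an induction using dense generation: every $X\in\mc{C}$ is a retract of an object built by iterated extensions from shifts of objects in $\mc{B}$. The base cases are the generators $B[j]$, for which one takes either $A=B[j], Y=0$ (if $j\geq 0$) or $A=0, Y=B[j+1]$ (if $j\leq -1$). For the inductive step, given weight decompositions of two outer vertices $X_1, X_3$ of a distinguished triangle $X_1\to X_2\to X_3\to X_1[1]$, one constructs a weight decomposition of $X_2$ by a diagram chase based on two applications of the octahedral axiom, invoking axiom (3) to guarantee that the relevant morphisms compose compatibly and that each piece lands in the correct envelope. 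Stability under retracts follows from the Karoubi-closure in axiom (1), completing the induction.

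For the heart, the inclusion $\on{Kar}_{\mc{C}}(\mc{B})\subseteq\mc{C}^{w\leq 0}\cap\mc{C}^{w\geq 0}$ is immediate from axiom (1). For the reverse inclusion, negativity of $\mc{B}$ implies that $\on{Ext}^1$ vanishes between heart objects coming from $\mc{B}$, so the extensions used to build such an object split; an induction on the envelope filtration then exhibits any $H$ in the intersection as a retract of a finite direct sum of objects of $\mc{B}$, i.e.\ as an object of $\on{Kar}_{\mc{C}}(\mc{B})$. Part (b) is then immediate: when $\mc{B}$ is itself idempotent complete, $\on{Kar}_{\mc{C}}(\mc{B})=\mc{B}$.
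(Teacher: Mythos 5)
The paper's own proof of this proposition is a one-line citation to \cite[Corollary 2.1.2]{bondarko2018constructing}, so what you are really attempting is to reprove Bondarko--Sosnilo from scratch. Your outline is correct for axioms (1), (2), (3), for the base case of (4), and for the octahedral step across an extension. But there is a genuine gap at the retract step, which you dispose of in a single clause: ``Stability under retracts follows from the Karoubi-closure in axiom (1).'' That is a non sequitur. Axiom (1) says the classes $\mc{C}^{w\leq 0}$ and $\mc{C}^{w\geq 0}$ are Karoubi-closed; it says nothing about whether a retract of an object \emph{admitting a weight decomposition} again admits one. Weight decompositions are not functorial, so there is no formal way to restrict a decomposition $B'[-1]\to X'\to A'$ along a retraction $X'\to X$. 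Your octahedral induction therefore runs cleanly only over the \emph{extension}-closure of $\cup_j\mc{B}[j]$; the passage from the extension-closure to the envelope (i.e.\ across retracts) is exactly the nontrivial content of the cited reference, whose title --- ``On constructing weight structures and extending them to idempotent completions'' --- names this very difficulty. The correct route is to first build the bounded weight structure on the extension-closure, and then invoke the separate theorem that a bounded weight structure extends along a dense embedding into an idempotent-complete enlargement; this cannot be replaced by an appeal to axiom (1).

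Two smaller omissions in the same spirit: you never address the uniqueness assertion (easy once the explicit descriptions of $\mc{C}^{w\leq 0}$ and $\mc{C}^{w\geq 0}$ are in hand, but it should be said), and your argument that the heart is contained in $\on{Kar}_{\mc{C}}(\mc{B})$ is too quick --- an object lying in the intersection of the two envelopes is not a priori presented as an iterated extension of \emph{unshifted} objects of $\mc{B}$, so ``the extensions used to build it split'' is not something you can read off directly from negativity; one instead argues via a weight decomposition of the object together with the orthogonality already established.
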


\begin{proof}
	(a) This is part of \cite[Corollary 2.1.2]{bondarko2018constructing}.
	
	(b) If $\mc{B}$ is idempotent complete, then it coincides with $\on{Kar}_{\mc{C}}(\mc{B})$. The conclusion follows from (a).
\end{proof}

We say that the weight structure $w$ on $\mc{C}$ is \emph{bounded} if for every object $X\in \mc{C}$ there exist $i,j\in \Z$ such that $X\in \mc{C}^{w\leq i}$ and $X\in \mc{C}^{w\geq j}$. 

\begin{prop}[Bondarko]\label{bondarko2}
	Let $w$ be a weight structure on $\mc{C}$. If $w$ is bounded and the heart of $w$ is idempotent complete, then $\mc{C}$ is idempotent complete.
\end{prop}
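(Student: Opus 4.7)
The strategy is induction on the weight width of an object. Since $w$ is bounded, for every $X\in\mc{C}$ there exist integers $a\leq b$ with $X\in\mc{C}^{w\geq a}\cap\mc{C}^{w\leq b}$, and I call the minimal such $b-a$ the weight width of $X$. Given an idempotent $p:X\to X$, I will prove that $p$ splits by induction on the width of $X$; this is enough because in any triangulated category every idempotent splits iff every retract is a direct summand (which, as recalled in the excerpt, is automatic in a triangulated category once the idempotents split).

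For the base case of width zero, after a shift $X$ lies in the heart $\mc{C}^{w\leq 0}\cap\mc{C}^{w\geq 0}$. Any idempotent endomorphism of $X$ is a morphism in the heart, which is idempotent complete by hypothesis, so $p$ splits there. Because the heart is a strictly full additive subcategory of $\mc{C}$, the resulting biproduct decomposition is automatically a biproduct decomposition in $\mc{C}$, so $p$ splits in $\mc{C}$ as well.

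For the inductive step, assume $X$ has positive weight width. Applying the weight decomposition axiom (4), suitably shifted, I obtain a distinguished triangle
\[ B\to X\to A\to B[1] \]
in which $A$ lies (up to shift) in the heart and $B$ has strictly smaller weight width than $X$. The orthogonality axiom (3) forces the relevant $\on{Hom}$ groups between $A$ and shifts of $B$ to vanish, and this lets me lift the idempotent $p$ to compatible idempotents on $A$ and $B$. By the inductive hypothesis each of these splits, and the octahedral axiom assembles the resulting decompositions into a decomposition $X\cong \on{Im}(p)\oplus\on{Ker}(p)$ in $\mc{C}$.

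The main obstacle is the final assembly step: weight decompositions are not functorial, so one cannot a priori pull back $p$ to genuine idempotents on $A$ and $B$, and the bookkeeping needed to check that the lifted splittings are mutually compatible is delicate. The orthogonality axiom, which is what guarantees the uniqueness (up to non-canonical choice) of the lifted morphisms, is exactly the tool that makes this possible; once one has produced candidate splittings of $p|_A$ and $p|_B$, a diagram chase in the octahedral axiom yields the desired decomposition of $X$. This style of argument is carried out in detail by Bondarko; an alternative, more conceptual route is to extend $w$ to a weight structure on $\on{Kar}(\mc{C})$ with the same heart (using that the heart is already idempotent complete) and to invoke boundedness to show that the inclusion $\mc{C}\hookrightarrow\on{Kar}(\mc{C})$ is then essentially surjective.
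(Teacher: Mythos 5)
The paper does not give an argument here: it simply cites \cite[Lemma 5.2.1]{bondarko2010weight}. Your strategy of inducting on weight width is the right shape, and the base case (an idempotent on a heart object splits in the heart, hence in $\mc{C}$, since the heart is a strictly full additive subcategory) is fine.

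The inductive step, however, has a genuine gap. You claim that orthogonality forces the relevant Hom groups between $A$ and shifts of $B$ to vanish, and that this lets you lift $p$ to compatible \emph{idempotents} on $A$ and $B$. But the orthogonality axiom gives $\on{Hom}(\mc{C}^{w\leq 0},\mc{C}^{w\geq 1})=0$, i.e.\ it kills Homs from low weight to high weight, not the other way around. In a weight decomposition $B[-1]\to X\xrightarrow{\alpha}A\xrightarrow{\beta}B$ with $A\in\mc{C}^{w\leq 0}$ and $B[-1]\in\mc{C}^{w\geq 1}$, a lift $p_A$ of $p$ always exists by the triangulated-category axioms, but two lifts differ by an element of the image of $\beta^*\colon\on{Hom}(B,A)\to\on{Hom}(A,A)$, and $\on{Hom}(B,A)$ is a Hom from $\mc{C}^{w\geq 0}$ to $\mc{C}^{w\leq 0}$, which orthogonality does \emph{not} annihilate. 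Consequently $p_A$ is not unique, and $p_A^2-p_A$ lies in that same non-vanishing image of $\beta^*$, so nothing you have invoked forces a choice with $p_A^2=p_A$; the same problem arises for the endomorphism induced on $B$. Producing honest idempotents on the weight pieces, and making the resulting splittings compatible with $\beta$, is precisely the substantive content of Bondarko's proof, and the sketch does not supply it. The alternative route you gesture at --- extending $w$ to $\on{Kar}(\mc{C})$ with the same (already idempotent-complete) heart and then concluding $\mc{C}=\on{Kar}(\mc{C})$ by boundedness --- is essentially the viewpoint of \cite{bondarko2018constructing}, but the extension of a weight structure to the idempotent completion is itself a nontrivial theorem and cannot be taken for granted.
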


\begin{proof}
	This is \cite[Lemma 5.2.1]{bondarko2010weight}.
\end{proof}

\section{The homomorphisms $H^i_{\ell}$}\label{sec4}
\subsection{Triangulated categories of motives}\label{triang-motives}
 Let $k$ be a perfect field of exponential characteristic $p$ (that is, $p=1$ if $\on{char}k=0$, and $p=\on{char}k$ if $\on{char}k>0$), and let $R$ be a ring (commutative, with identity) such that $p$ is invertible in $R$. We follow the notation of \cite[Section 5]{totaro2016motive}. We denote Voevodsky's ``big" triangulated category of motives over $k$ with $R$ coefficients by $\on{DM}(k;R)$; see \cite[p. 2095]{totaro2016motive}. It is a tensor triangulated category, with a symmetric monoidal product $\otimes$. 

If $X$ is a smooth separated $k$-scheme of finite type, we denote by $M(X)$ the motive of $X$ and by $M^c(X)$ the compactly supported motive of $X$; see \cite[p. 2096]{totaro2016motive}. We have $M(X)\otimes M(Y)\cong M(X\times_k Y)$ and $M^c(X)\otimes M^c(Y)\cong M^c(X\times_k Y)$. The category $\on{DM}(k;R)$ contains a motive $R(j)$ for every $j\in \Z$, with the properties that $R(a)\otimes R(b)\cong R(a+b)$ for all integers $a,b$ and that $R(j)[2j]=M^c(\A_k^j)$ for all $j\geq 0$; see \cite[p. 2096]{totaro2016motive}. We have $M^c(\P^1_k)=M(\P^1_k)=R\oplus R(1)[2]$. If $M\in \on{DM}(k;R)$ and $j\in \Z$, we set $M(j):=M\otimes R(j)$. 

The category $\on{DM}_{\on{gm}}(k;R)$ of geometric motives is defined as the smallest thick triangulated subcategory of $\on{DM}(k;R)$ which contains $M(X)(a)$ for all smooth separated schemes $X$ of finite type over $k$ and all integers $a$; see \cite[p. 2099]{totaro2016motive}.

We denote by $\on{DMT}(k;R)$ the smallest triangulated subcategory of $\on{DM}(k;R)$ closed under arbitrary direct sums and containing $R(j)$ for every $j\in \Z$.
We denote by $\on{DMT}_{\on{gm}}(k;R)$ the smallest	thick triangulated subcategory of $\on{DM}_{\on{gm}}(k;R)$ containing $R(j)$ for every $j\in \Z$. Objects of $\on{DM}_{\on{gm}}(k;R)$ are called mixed Tate motives, and objects  $\on{DMT}_{\on{gm}}(k;R)$ are called ``small" mixed Tate motives.

\subsection{The construction} For every closed embedding $Z\hookrightarrow X$ with open complement $U$, there is a distinguished triangle \[M^c(Z)\to M^c(X)\to M^c(U)\to M^c(Z)[1].\] Moreover, as we have mentioned, for every two smooth $k$-varieties $X,Y$, we have $M^c(X\times Y)=M^c(X)\otimes M^c(Y)$. Since $k$ is perfect, by \Cref{smoothsuffice} the association $\set{X}\mapsto [M^c(X)]$ for every smooth $k$-variety $X$ defines a ring homomorphism \[K_0(\on{Var}_k)\to K_0^{\on{tr}}(\on{DM}_{\on{gm}}(k;R)).\]
Let $\on{Mot}(k;R)$ be the category of (pure) Chow motives over $k$ with $R$-coefficients. Since $k$ is perfect and $p$ is invertible in $R$, the Voevodsky embedding $\on{Mot}(k;R)\to \on{DM}_{\on{gm}}(k;R)$ induces an isomorphism of rings \begin{equation}\label{mot-dm-iso}K_0(\on{Mot}(k;R))\xrightarrow{\sim} K_0^{\on{tr}}(\on{DM}_{\on{gm}}(k;R));\end{equation} see \cite[Corollary 6.4.3]{bondarko2009differential} if $\on{char}k=0$ and \cite[Proposition 2.3.3]{bondarko2011motivic} if $\on{char}k>0$ (here we use that $k$ is perfect). We thus obtain a ring homomorphism \[\nu_R:K_0(\on{Var}_k)\to K_0(\on{Mot}(k;R)).\] 
Let $\mc{G}$ be the absolute Galois group of $k$, and let $\ell\neq p$ be a prime number. We define $\nu_{\ell}:=\nu_{\Z_\ell}$. We let $\on{Corr}(k;\Z_{\ell})$ be the additive category whose objects are smooth projective $k$-varieties, and whose morphisms are $\Z_{\ell}$-correspondences. We have an additive functor $F:\on{Corr}(k;\Z_{\ell})\to D^b(\on{Rep}_{\mc{G},\Z_{\ell}})$, which to a smooth projective $k$-variety associates the complex $(M,d)$, where $M^i:=H^i_{\text{\'et}}(\cl{X},\Z_{\ell})$ and $d^i=0$ for every integer $i$. By the K\"unneth formula with $\Z_{\ell}$-coefficients for smooth projective varieties over an algebraically closed field (see \cite[Theorem VI.8.21]{milne1980etale}), 
we see that $F$ respects tensor products. Since $D^b(\on{Rep}_{\mc{G},\Z_{\ell}})$ is idempotent complete and $[\Z_{\ell}(1)]$ is invertible in $D^b(\on{Rep}_{\mc{G},\Z_{\ell}})$, $F$ factors through an additive functor \[\on{Mot}(k;\Z_{\ell})\to D^b(\on{Rep}_{\mc{G},\Z_{\ell}})\] which also respects the monoidal structures. We thus obtain a ring homomorphism 
\[\rho_{\ell}:K_0(\on{Mot}(k;\Z_{\ell}))\to L_0(\on{Rep}_{\mc{G},\Z_{\ell}}).\]

\begin{prop}\label{euler-properties}
	Let $k$ be a perfect field of exponential characteristic $p$, and let $\ell$ be a prime different from $p$.
	\begin{enumerate}[label=(\alph*)]
		\item The composition \[\chi_{\ell}:=\rho_{\ell}\circ\nu_{\ell}:K_0(\on{Var}_k)\to L_0(\on{Rep}_{\mc{G},\Z_{\ell}})\] has the property that for every smooth proper $k$-variety $X$, $\chi_{\ell}(\set{X})$ is the class of the complex $(M,d)$, where $M^i=H^i_{\text{\'et}}(\cl{X},\Z_{\ell})$ and $d^i=0$ for every $i\in\Z$. 
		\item The homomorphism  $\chi_{\ell}$ induces a commutative diagram of ring homomorphisms
		\[
		\begin{tikzcd}
		K_0(\on{Stacks}_k) \arrow[r,"\Phi^{-1}_{\L}\chi_{\ell}"] \arrow[d]  & \Phi^{-1}_{[\Z_{\ell}(-1)[-2]]}L_0(\on{Rep}_{\mc{G},\Z_{\ell}}) \arrow[d] \\  
		\hat{K}_0(\on{Var}_k) \arrow[r,"\hat{\chi}_{\ell}"] &  \hat{L}_0(\on{Rep}_{\mc{G},\Z_{\ell}}).
		\end{tikzcd}		
		\]
		The homomorphisms $\Phi^{-1}_{\L}\chi_{\ell}$ and $\hat{\chi}_{\ell}$ are the unique extensions of $\chi_{\ell}$ to $K_0(\on{Stacks}_k)$ and $\hat{K}_0(\on{Var}_k)$, respectively.
		\item If $\on{char}k=0$, the homomorphisms $\chi_{\ell}, \Phi^{-1}_{\L}\chi_{\ell}$ and $\hat{\chi}_{\ell}$ coincide with those introduced by Ekedahl.
	\end{enumerate}
\end{prop}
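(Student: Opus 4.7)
My plan is to verify each part by tracing through the definitions.

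For part (a), when $X$ is smooth projective, $\nu_{\ell}(\set{X})$ equals the class of the Chow motive $h(X)$ under the isomorphism \eqref{mot-dm-iso}, and by the very definition of the functor $F$, $\rho_{\ell}([h(X)])$ is the class of the complex $(H^{\bullet}_{\text{\'et}}(\cl{X},\Z_{\ell}), 0)$. For $X$ smooth proper but not necessarily projective, one still has $M^c(X) = M(X)$ in $\on{DM}_{\on{gm}}(k;\Z_{\ell})$, and this motive lies in the image of the Voevodsky embedding of $\on{Mot}(k;\Z_{\ell})$. The $\ell$-adic realization of the corresponding Chow motive is again the complex $(H^{\bullet}_{\text{\'et}}(\cl{X},\Z_{\ell}), 0)$ with zero differentials: one applies $F$ to a decomposition of $[h(X)]$ in $K_0(\on{Mot}(k;\Z_{\ell}))$ into classes of smooth projective motives, obtained from Chow's lemma and blow-up relations in characteristic zero, and from Gabber's alterations in positive characteristic.

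For part (b), the key computation is $\chi_{\ell}(\L) = [\Z_{\ell}(-1)[-2]]$, which follows from $\set{\P^1_k} = 1 + \L$ in $K_0(\on{Var}_k)$ together with the identification $\chi_{\ell}(\set{\P^1_k}) = [\Z_{\ell}] + [\Z_{\ell}(-1)[-2]]$ obtained in part (a). Since $\chi_{\ell}$ is a ring homomorphism, each element $\L^n - 1 \in \Phi_{\L}$ maps to $[\Z_{\ell}(-1)[-2]]^n - 1 \in \Phi_{[\Z_{\ell}(-1)[-2]]}$, so the universal property of localization produces the top horizontal arrow $\Phi^{-1}_{\L}\chi_{\ell}$ as the unique such extension. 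For the bottom arrow, I would check filtration-compatibility: if $X$ is smooth of dimension $d$, the complex $(H^{\bullet}_{\text{\'et}}(\cl{X},\Z_{\ell}), 0)$ sits in degrees $[0,2d]$, and tensoring with $[\Z_{\ell}(1)[2]]^m$ shifts it into degrees $\leq 2(d-m)$. Thus $\chi_{\ell}$ carries $\on{Fil}^n K_0(\on{Var}_k)$ into $\on{Fil}^{2n} L_0(\on{Rep}_{\mc{G},\Z_{\ell}})$, so it is continuous and passes to the completions. Commutativity of the square is immediate because both extensions restrict to $\chi_{\ell}$ on $K_0(\on{Var}_k)$, and uniqueness of both extensions follows from the respective universal properties.

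For part (c), \Cref{bittner} shows that $K_0(\on{Var}_k)$ is generated as a group by classes of smooth projective varieties, so two ring homomorphisms out of $K_0(\on{Var}_k)$ that agree on such generators must coincide. Ekedahl's construction of $\chi_{\ell}$ in \cite{ekedahl2009grothendieck} sends the class of a smooth projective $X$ to the same complex $(H^{\bullet}_{\text{\'et}}(\cl{X},\Z_{\ell}), 0)$ by the same K\"unneth argument, so part (a) gives the agreement of the two $\chi_{\ell}$'s on $K_0(\on{Var}_k)$, and then their extensions $\Phi^{-1}_{\L}\chi_{\ell}$ and $\hat{\chi}_{\ell}$ coincide by the uniqueness established in part (b).

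The main technical subtlety lies in part (a) for smooth proper non-projective $X$: in characteristic zero this reduces to the projective case via Bittner-type blow-up relations, but in positive characteristic one must invoke the Voevodsky embedding for all smooth proper varieties and rely on Gabber's alterations to decompose $[h(X)]$ in $K_0(\on{Mot}(k;\Z_{\ell}))$; everything else is a routine unwinding of the universal properties of $\Phi_{\L}^{-1}(-)$ and $\hat{(-)}$.
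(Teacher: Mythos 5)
Your proposal follows the same overall route as the paper: trace definitions for part (a), use $\chi_{\ell}(\L)=[\Z_{\ell}(-1)[-2]]$ via $\set{\P^1_k}=1+\L$ together with the universal properties of localization and completion for part (b), and invoke Bittner's generation by smooth projective classes for part (c). Parts (b) and (c) of your argument are essentially identical to the paper's; the explicit filtration bound $\on{Fil}^nK_0(\on{Var}_k)\to\on{Fil}^{2n}L_0(\on{Rep}_{\mc{G},\Z_{\ell}})$ you give is a useful unpacking of the continuity claim that the paper merely asserts.

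The one place where your argument diverges is the treatment of smooth proper, non-projective $X$ in part (a). The paper sidesteps this entirely by asserting $\nu_{\ell}(\set{X})=[(X,\on{id}_X)]$ for ``smooth and proper'' $X$, implicitly allowing $\on{Corr}(k;\Z_{\ell})$ and the functor $F$ to be built on smooth proper varieties (the K\"unneth reference, Milne VI.8.21, applies in that generality). Your version instead says ``one applies $F$ to a decomposition of $[h(X)]$ in $K_0(\on{Mot}(k;\Z_{\ell}))$''--- but $F$ is a functor on objects, not on $K_0$-classes, and $\rho_{\ell}([h(X)])=[F(h(X))]$ requires identifying the object $h(X)$, not just its class. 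What one actually needs is an object-level direct-summand presentation of $h(X)$ inside some $h(Y)(n)$ with $Y$ smooth projective (via weak factorization and blow-up formulas in characteristic zero, or Gabber's $\ell'$-alterations plus the transfer argument in characteristic $p$), together with the compatibility of the correspondence action on Chow motives with its action on \'etale cohomology. Your intuition about which inputs are needed is correct, but the $K_0$-level phrasing would not let you conclude that $F(h(X))$ is literally the complex $(H^{\bullet}_{\text{\'et}}(\cl{X},\Z_{\ell}),0)$. Since the paper's applications (part (c), the construction of $\hat H^i_{\ell}$, and \Cref{calculation}) only ever use smooth projective varieties, this subtlety is not load-bearing, but it is worth being precise about.
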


\begin{proof}
	(a) If $X$ is smooth and proper over $k$, then $\nu_{\ell}(\set{X})$ is the class of the Chow motive $(X,\on{id}_X)$ of $\on{Mot}(k;\Z_{\ell})$.
	
	(b) We note that the image of $\Phi_{\L}$ in $\hat{K}_0(\on{Var}_k)$ consists of invertible elements, and that the same is true for the image of $\Z_{\ell}(-1)[-2]$ in $\hat{L}_0(\on{Rep}_{\mc{G},\Z_{\ell}})$. By the universal property of localization, we obtain the vertical homomorphisms in the diagram.
	Using the decomposition $\set{\P^1_k}=\L+1$, we see that $\chi_{\ell}(\L)=[\Z_{\ell}(-1)[-2]]$. By the universal property of localization, $\chi_{\ell}$ uniquely extends to a homomorphism $\Phi^{-1}_{\L}\chi_{\ell}$. 
	
	The homomorphism $\chi_{\ell}:K_0(\on{Var}_k)[\L^{-1}]\to L_0(\on{Rep}_{\Z_{\ell}[\mc{G}]})[[\Z(-1)[-2]]^{-1}]$ is continuous with respect to the filtration topologies. By the universal property of the completion, it induces a unique homomorphism $\hat{\chi}_{\ell}$ at the level of completions. This completes the construction of the commutative diagram.
	
	(c) If $X$ is a smooth projective $k$-variety, by (a) we have that $\chi_{\ell}(\set{X})$ is the class of the complex $(M,d)$, where $M^i=H^i_{\text{\'et}}(\cl{X},\Z_{\ell})$ and $d^i=0$ for every $i\in \Z$. Therefore, our definition of $\chi_{\ell}$ agrees with that of Ekedahl on the classes of smooth projective $k$-varieties; see \cite[Section 3]{ekedahl2009grothendieck}. By \Cref{bittner}, $K_0(\on{Var}_k)$ is generated by the classes of smooth projective $k$-varieties.
\end{proof}

\begin{rmk}	
	When $\on{char}k\neq 0$, we do not know whether $K_0(\on{Var}_k)$ is generated by classes of smooth proper $k$-varieties. Thus, we do not know whether $\chi_{\ell}$ is determined by its behavior on classes of smooth proper $k$-varieties.
\end{rmk}

	Let $R$ be a topological commutative ring with identity, let $\mc{G}$ be a profinite group, and let $i$ be an integer. We have a group homomorphism \[h^i_{R}:L_0(\on{Rep}_{\mc{G},R})\to K_0(\on{Rep}_{\mc{G},R})\] which sends the class of a complex $(M,d)$ to $[\on{Ker}d^i/\on{Im}d^{i+1}]$. If $(M,d)$ is concentrated in degree $<i$, this is zero. It follows that, when we endow $L_0(\on{Rep}_{\mc{G},R})$ with the filtration topology and $K_0(\on{Rep}_{\mc{G},R})$ with the discrete topology, the homomorphism $h^i_R$ is continuous, and so it uniquely induces a group homomorphism \[\hat{h}^i_R:\hat{L}_0(\on{Rep}_{\mc{G},R})\to K_0(\on{Rep}_{\mc{G},R}).\] 
	
	Let now $k$ be a perfect field of exponential characteristic $p$, let $\ell\neq p$ be a prime number, and let $\mc{G}$ be the absolute Galois group of $k$. We define $h^i_{\ell}:=h^i_{\Z_{\ell}}$ and $\hat{h}^i_{\ell}:=\hat{h}^i_{\Z_{\ell}}$. For every smooth projective $k$-variety $X$ and every $m\in\Z$, the composition
	\[\hat{H}^i_{\ell}:=\hat{h}^i_{\ell}\circ \hat{\chi}_{\ell}:\hat{K}^0(\on{Var}_k)\to K_0(\on{Rep}_{\mc{G},\Z_{\ell}})\] sends $\set{X}/\L^m$ to $[H_{\text{\'et}}^{i+2m}(\cl{X},\Z_{\ell}(m))]$. The composition \[K_0(\on{Stacks}_k)\to \hat{K}_0(\on{Var}_k)\xrightarrow{\hat{H}^i_{\ell}} K_0(\on{Rep}_{\mc{G},R})\] will be denoted by $H^i_{\ell}$.
	
	If $k$ is algebraically closed, then the absolute Galois group $\mc{G}$ of $k$ is trivial, hence $K_0(\on{Rep}_{\mc{G},R})=K_0(\on{Mod}_R)$.
	
	The following observation is implicit in \cite{ekedahl2009grothendieck} and \cite{ekedahl2009geometric}.
	
	\begin{lemma}\label{comparison}
	Assume $k=\C$. Denote by $H^i:\hat{K}_0(\on{Var}_{\C})\to K_0(\on{Mod}_{\Z})$ the continuous homomorphism determined by $\set{X}/\L^m\mapsto H^{i+2m}(X(\C),\Z)$ (singular cohomology). For every prime $\ell$, let $(-)\otimes_{\Z} \Z_{\ell}:K_0(\on{Mod}_{\Z})\to K_0(\on{Mod}_{\Z_{\ell}})$ be the map induced by $[M]\mapsto [M\otimes \Z_{\ell}]$ for every finitely generated abelian group $M$. Then 
	\[H^i(\alpha)\otimes_{\Z} \Z_{\ell}=H^i_{\ell}(\alpha)\] for every $\alpha\in \hat{K}_0(\on{Var}_{\C})$. 
	\end{lemma}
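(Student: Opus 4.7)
The plan is to recognize that both sides are continuous group homomorphisms from $\hat{K}_0(\on{Var}_{\C})$ to the discrete group $K_0(\on{Mod}_{\Z_{\ell}})$, and to reduce checking the identity to a single, well-known input: smooth projective $\C$-varieties. For the map $H^i_{\ell}$, continuity was built into its definition as the composition $\hat{h}^i_{\ell}\circ\hat{\chi}_{\ell}$ of continuous homomorphisms (the latter via the universal property of completion in \Cref{euler-properties}(b)). For $\alpha\mapsto H^i(\alpha)\otimes_{\Z}\Z_{\ell}$, note that $H^i$ itself is continuous by hypothesis, while $(-)\otimes_{\Z}\Z_{\ell}:K_0(\on{Mod}_{\Z})\to K_0(\on{Mod}_{\Z_{\ell}})$ is a well-defined homomorphism of discrete groups (since $\Z_{\ell}$ is flat over $\Z$, tensoring respects direct sums and short exact sequences of finitely generated modules), hence automatically continuous. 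The composition is therefore continuous. Since the image of $K_0(\on{Var}_{\C})$ is dense in $\hat{K}_0(\on{Var}_{\C})$ and any two continuous maps into a discrete group which coincide on a dense subset coincide everywhere, it suffices to verify the identity on the image of $K_0(\on{Var}_{\C})$.

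By Bittner's theorem (\Cref{bittner}), $K_0(\on{Var}_{\C})$ is generated as an abelian group by classes $\set{X}$ of smooth projective $\C$-varieties, so the verification further reduces to this case. For such an $X$, the definition of $H^i$ via singular cohomology gives
\[H^i(\set{X})\otimes_{\Z}\Z_{\ell}=[H^i(X(\C),\Z)]\otimes_{\Z}\Z_{\ell}=[H^i(X(\C),\Z)\otimes_{\Z}\Z_{\ell}].\]
On the other hand, by \Cref{euler-properties}(a) and the definition of $h^i_{\ell}$ (extracting the $i$-th cohomology of a complex with zero differentials) we have
\[H^i_{\ell}(\set{X})=[H^i_{\text{\'et}}(X,\Z_{\ell})].\]
These two classes are equal in $K_0(\on{Mod}_{\Z_{\ell}})$ by Artin's comparison theorem between singular and $\ell$-adic cohomology for smooth complex varieties, which provides an isomorphism $H^i(X(\C),\Z)\otimes_{\Z}\Z_{\ell}\cong H^i_{\text{\'et}}(X,\Z_{\ell})$ of finitely generated $\Z_{\ell}$-modules.

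There is no serious obstacle here: the only minor point to be careful about is that the equality must be checked at the level of classes in $K_0(\on{Mod}_{\Z_{\ell}})$ (not at the level of $\Z_{\ell}$-modules themselves), so a module isomorphism is enough and questions about the Galois action or Tate twists do not intervene, since $k=\C$ is algebraically closed and the underlying $\Z_{\ell}$-module of $\Z_{\ell}(m)$ is $\Z_{\ell}$. Once agreement on smooth projective generators is in hand, continuity and density propagate it to all of $\hat{K}_0(\on{Var}_{\C})$, completing the proof.
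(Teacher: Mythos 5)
Your proof is correct and follows essentially the same route as the paper: reduce by continuity and density to the image of $K_0(\on{Var}_{\C})$, use Bittner's theorem to further reduce to smooth projective $\C$-varieties, and then invoke Artin's comparison theorem. The only difference is that the paper is more scrupulous about the comparison step: since Artin's theorem in \cite[Milne]{milne1980etale} is stated with finite coefficients $\Z/\ell^n$, the paper explicitly passes to the inverse limit and then checks, using that $X(\C)$ is homeomorphic to a finite simplicial complex (so the cochain complex can be taken bounded with finitely generated terms), that $\varprojlim H^i(X(\C),\Z/\ell^n)\cong H^i(X(\C),\Z)\otimes_{\Z}\Z_\ell$, whereas you invoke the $\Z_\ell$-coefficient version of the comparison as a black box. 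That version is standard, so this is a stylistic rather than substantive difference.
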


\begin{proof}
	It is enough to consider those $\alpha$ of the form $\set{X}/\L^m$, where $X$ is a smooth projective variety over $\C$, and $m\in \Z$. We have $H^i(\set{X}/\L^m)=H^{i+2m}(X(\C),\Z)$ and $H^i_{\ell}(\set{X}/\L^m)=H_{\text{\'et}}^{i+2m}(X,\Z_{\ell})$. By Artin's Comparison Theorem, for every $n$ we have a canonical isomorphism
	\[H^i_{\text{\'et}}(X,\Z/\ell^n)\xrightarrow{\sim} H^i(X(\C),\Z/\ell^n).\]
	Passing to the projective limit in $n$, we get
	\[H^i_{\text{\'et}}(X,\Z_{\ell}):=\varprojlim H^i_{\text{\'et}}(X,\Z/\ell^n)\cong \varprojlim H^i(X(\C),\Z/\ell^n).\] Recall that every smooth complex variety is homeomorphic to a finite simplicial complex. Moreover, for every bounded complex $(M,d)$ of finite abelian groups and for every integer $i$ the natural map \[H^i(M)\otimes_{\Z}\Z_{\ell}\to \varprojlim H^i(M)/\ell^n\] is an isomorphism. It follows that \[\varprojlim H^i(X(\C),\Z/\ell^n)\cong H^i(X(\C),\Z_{\ell}).\]
	Since  $\Z_{\ell}$ is flat over $\Z$, we have 	$H^i(X(\C),\Z_{\ell})\cong H^i(X(\C),\Z)\otimes_{\Z}\Z_{\ell}$. We conclude that $H^i_{\ell}(\set{X}/\L^m)=H^i(\set{X}/\L^m)\otimes_{\Z}\Z_{\ell}$, as desired.
\end{proof}

\begin{rmk}
	Instead of $\on{Rep}_{\mc{G},\Z_{\ell}}$, Ekedahl used a category $\on{Coh}_k$ of \emph{mixed} Galois representations; see \cite[Section 2]{ekedahl2009grothendieck}. We could have stated \Cref{euler-properties} with $\on{Rep}_{\mc{G},\Z_{\ell}}$ replaced by $\on{Coh}_k$, without any change in the proof. Since the mixedness property is not needed to define the $H^i_{\ell}$, we decided to use the simpler $\on{Rep}_{\mc{G},\Z_{\ell}}$ instead of $\on{Coh}_k$.
\end{rmk}
	
	In \cite{bondarko2020chow}, Bondarko pursues the study of variants of $H^i_{\ell}$ from a different point of view. Bondarko's paper does not treat stacks, and focuses on unramified cohomology of sheaves with transfers.
	
	\section{A variant of $H^i_{\ell}$}\label{simplified}
	The material of this section will not be used elsewhere in this article.
	
	The discussion of the previous section simplifies if one is only interested in cohomology with coefficients $\Z/\ell$. There is a homomorphism of rings \[\sigma_{\Z/\ell}: K_0(\on{Mot}(k;\Z/\ell))\to K_0(\on{Rep}_{\mc{G},\Z/\ell})[t],\] which sends the class of a $\Z/\ell$-motive $(X,q)$ to $\sum_j[q_*H^i(\cl{X},\Z/\ell)]t^i$. The fact that $\psi_{\Z/\ell}$ is a ring homomorphism follows from the K\"unneth formula with $\Z/\ell$ coefficients; see \cite[Theorem VI.8.1]{milne1980etale}.
	\begin{prop}\label{fell}
		Let $k$ be a perfect field of exponential characteristic $p$, and let $\ell$ be a prime different from $p$.
		\begin{enumerate}[label=(\alph*)]
			\item The composition \[\psi_{\Z/\ell}:=\sigma_{\Z/\ell}\circ\nu_{\Z/\ell}:K_0(\on{Var}_k)\to K_0(\on{Rep}_{\mc{G},\Z/{\ell}})[t]\] has the property that for every smooth proper $k$-scheme $X$, we have \[\psi_{\Z/\ell}(\set{X})=\sum_i[H^i_{\text{\'et}}(\cl{X},\Z/\ell)]t^i.\] 
			\item The homomorphism  $\psi_{\Z/\ell}$ uniquely induces a commutative diagram of ring homomorphisms
			\[
			\begin{tikzcd}
			K_0(\on{Stacks}_k) \arrow[r,"\Phi^{-1}_{\L}\psi_{\Z/\ell}"] \arrow[d]  & \Phi^{-1}_{t}K_0(\on{Rep}_{\mc{G},\Z/{\ell}})[t] \arrow[d] \\  
			\hat{K}_0(\on{Var}_k) \arrow[r,"\hat{\psi}_{\Z/\ell}"] & K_0(\on{Rep}_{\mc{G},\Z/{\ell}})[\![t]\!].
			\end{tikzcd}		
			\]
		\end{enumerate}
	\end{prop}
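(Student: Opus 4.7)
The plan is to mirror the argument for \Cref{euler-properties}, substituting $\sigma_{\Z/\ell}$ for $\rho_{\ell}$ throughout.

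For part (a), by construction $\nu_{\Z/\ell}$ sends the class of a smooth projective $k$-variety $X$ to the Chow motive $[(X,\on{id}_X)]$, so
\[\psi_{\Z/\ell}(\set{X})=\sigma_{\Z/\ell}[(X,\on{id}_X)]=\sum_i[(\on{id}_X)_*H^i_{\text{\'et}}(\cl{X},\Z/\ell)]t^i=\sum_i[H^i_{\text{\'et}}(\cl{X},\Z/\ell)]t^i,\]
as claimed.

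For part (b), I would first compute $\psi_{\Z/\ell}(\L)=[\Z/\ell(-1)]t^2$ from $\set{\P^1_k}=\L+1$ and the \'etale cohomology of $\P^1_k$. To extend $\psi_{\Z/\ell}$ to $K_0(\on{Stacks}_k)=\Phi^{-1}_{\L}K_0(\on{Var}_k)$ with values in $\Phi^{-1}_{t}K_0(\on{Rep}_{\mc{G},\Z/\ell})[t]$, I need every element of $\Phi_\L$ to be sent to a unit. The image of $\L$ is a unit because $[\Z/\ell(-1)]$ has inverse $[\Z/\ell(1)]$ in $K_0(\on{Rep}_{\mc{G},\Z/\ell})$ and $t$ is inverted. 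For $\L^n-1$, set $u:=[\Z/\ell(-n)]t^{2n}$; since the Galois action on $\mu_\ell$ factors through the finite group $(\Z/\ell)^{\times}$, the class $[\Z/\ell(-1)]$ has finite order $N$ in $K_0(\on{Rep}_{\mc{G},\Z/\ell})^{\times}$, so $u^N=t^{2nN}$ and $u^N-1=t^{2nN}-1\in\Phi_t$ is a unit. The factorization $u^N-1=(u-1)(u^{N-1}+\cdots+1)$ then shows that $u-1=\psi_{\Z/\ell}(\L^n-1)$ divides a unit, hence is itself a unit. The universal property of localization now yields $\Phi^{-1}_\L\psi_{\Z/\ell}$.

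For the lower horizontal map, $\psi_{\Z/\ell}$ is continuous with respect to the natural filtrations on both sides (the power of $t$ appearing in $\psi_{\Z/\ell}(\set{X}/\L^m)$ is bounded above by $2(\dim X - m)$), so the universal property of completion induces $\hat{\psi}_{\Z/\ell}$. Commutativity of the square is immediate from these universal properties.

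The main technical obstacle is the verification that $[\Z/\ell(-n)]t^{2n}-1$ is a unit in $\Phi^{-1}_t K_0(\on{Rep}_{\mc{G},\Z/\ell})[t]$; this step rests essentially on the finite order of the cyclotomic character modulo $\ell$, which is precisely what allows one to avoid the elaborate setup with $D^b(\on{Rep}_{\mc{G},\Z_\ell})$ used in the previous section. The remaining manipulations are formal and parallel \Cref{euler-properties} very closely.
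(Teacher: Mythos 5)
Your proposal is correct and follows the route the paper sketches (the paper's own proof only says ``analogous to \Cref{euler-properties}''). The one point that is \emph{not} literally analogous -- and which you handle correctly -- is the verification that $\psi_{\Z/\ell}$ sends $\Phi_{\L}$ into the units of $\Phi^{-1}_tK_0(\on{Rep}_{\mc{G},\Z/\ell})[t]$. In \Cref{euler-properties} the target is localized at $\Phi_{\chi_{\ell}(\L)}$ itself, so the universal property of localization applies with no work; here, by contrast, $\psi_{\Z/\ell}(\L)=[\Z/\ell(-1)]t^2$ differs from a power of $t$ by the nontrivial twist $[\Z/\ell(-1)]$, and $\psi_{\Z/\ell}(\L^n-1)=[\Z/\ell(-n)]t^{2n}-1$ does not lie in the multiplicative set generated by $\Phi_t$. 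Your observation that the mod-$\ell$ cyclotomic character $\mc{G}\to(\Z/\ell)^{\times}$ has finite image, so that $[\Z/\ell(-1)]$ has finite multiplicative order $N$ dividing $\ell-1$, combined with $u^N-1=t^{2nN}-1\in\Phi_t$ and the factorization $u^N-1=(u-1)(u^{N-1}+\cdots+1)$, cleanly supplies the missing verification. The remaining steps (part (a), the filtration bound $2(\dim X - m)$, continuity and passage to completions) do transfer verbatim from \Cref{euler-properties}, as you say.
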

	Entirely analogous results hold using $H^{\bullet}(-,\Q_{\ell})$, where $\ell$ is a prime number different from $p$.
	\begin{proof}
		The proof is analogous to that of \Cref{euler-properties}. 
	\end{proof}

	We now give an application of \Cref{fell} to the computation of the motivic class $\set{B_kG}\in K_0(\on{Stacks}_k)$ of a linear algebraic $k$-group $G$, and to its relation with the stable rationality of $B_kG$.
	
	Recall that $B_kG$ is said to be stably rational over $k$ if there exist a $G$-represen\-tation $V$ over $k$ and a dense open subscheme $U\c V$ such that there exists a $G$-torsor $U\to U/G$, where $U/G$ is a stably rational variety (that is, we may find $m\geq 0$ such that $U/G\times_k \A_k^m$ is birationally equivalent to some $\A_k^n$). By the no-name lemma, this definition is independent of the choice of $V$ and $U$.

	The equality $\set{B_kG}=1$ in $K_0(\on{Stacks}_k)$ holds for a large number of finite group schemes $G$ over $k$. For example, it holds if $G=\mu_n$, or a symmetric group $S_n$; see \cite[Proposition 3.2, Theorem 4.3]{ekedahl2009geometric}. On the other hand, there are also examples of finite groups $G$ such that $\set{B_kG}\neq 1$; see \cite[Theorem 5.1, Corollary 5.8]{ekedahl2009geometric}. It is striking that in all examples of finite group schemes $G$ considered in \cite{ekedahl2009geometric}, $B_kG$ is stably rational over $k$ if and only if $\set{B_kG}=1$ in $K_0(\on{Stacks}_k)$.
	
	The equality $\set{B_kG}\set{G}=1$ in $K_0(\on{Stacks}_k)$ holds for many connected linear algebraic groups $G$. For example, it holds if $G$ is special (i.e. $H^1(K,G)=0$ for every field extension $K/k$), $G=\on{PGL}_2$ and $\on{PGL}_3$ by \cite{bergh2015motivic}, $G=\on{SO}_n$ by \cite{dhillon2016motive} and \cite{talpo2017motivic}, and $G$ a split group of type $G_2$, $\on{Spin}_7$ and $\on{Spin}_8$ by \cite{pirisi2017motivic}. In all these examples $B_kG$ is stably rational.

	Assume that $k$ is a field of characteristic zero admitting a biquadratic extension. In \cite[Theorem 1.5]{scavia2018motivic}, we constructed the first example of a connected group $G$ such that $\set{B_kG}\set{G}\neq 1$. It has the property that $B_kG$ is stably rational. In \cite[Theorem 1.6]{scavia2018motivic}, we gave an example of a finite group scheme $A$ such that $B_kA$ is stably rational but $\set{B_kA}\neq 1$.
	
	As an application of \Cref{fell}, we generalize the results of \cite{scavia2018motivic} to the case where the characteristic of the base field is different from $2$. 
	
	\begin{prop}
		Let $k$ be a perfect field of characteristic different from $2$ which admits a biquadratic field extension. 
	\begin{enumerate}[label=(\alph*)]
		\item There exists a $k$-torus $G$ of rank $3$ such that $B_kG$ is stably rational but $\set{B_kG}\set{G}\neq 1$ in $K_0(\on{Stacks}_k)$.
		\item There exists a finite \'etale group $k$-scheme of multiplicative type $A$ such that $B_kA$ is stably rational
		and $\set{B_kA}\neq 1$ in $K_0(\on{Stacks}_k)$.
	\end{enumerate}	
	\end{prop}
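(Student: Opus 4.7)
The plan is to reuse the $k$-torus $G$ and finite étale $k$-group scheme $A$ of multiplicative type constructed in \cite[Theorems 1.5, 1.6]{scavia2018motivic}, verify stable rationality by the same lattice-theoretic arguments used there, and detect the non-triviality of the motivic classes by means of the $\ell$-adic invariants from \Cref{fell} in place of the complex-analytic invariants used in \cite{scavia2018motivic}. Fix a biquadratic extension $K/k$, set $\Gamma := \on{Gal}(K/k) \cong (\Z/2)^2$, and choose an odd prime $\ell \neq \on{char} k$, which is possible since $\on{char} k \neq 2$. Let $L$ be the specific rank-$3$ $\Gamma$-lattice of \cite{scavia2018motivic} and let $G$ be the $k$-torus whose character module is $L$; let $M$ be the specific finite $\Gamma$-module of \cite{scavia2018motivic} and $A$ the finite étale $k$-group scheme of multiplicative type corresponding to $M$. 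These constructions depend only on $\Gamma$, $L$, and $M$, so they are available over any $k$ of characteristic $\neq 2$ admitting $K$.

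The stable rationality of $B_kG$ (resp.\ $B_kA$) is, via the no-name lemma, equivalent to a purely combinatorial condition on $L$ (resp.\ $M$) as a $\Gamma$-module, namely that the associated flasque class is trivial or equivalently that $L$ (resp.\ $M$) is stably permutation; this condition is verified in \cite{scavia2018motivic} in a characteristic-free way and applies verbatim here. To separate $\set{B_kG}\set{G}$ and $\set{B_kA}$ from $1$ in $K_0(\on{Stacks}_k)$, we apply $\Phi_{\L}^{-1}\psi_{\Q_\ell}$, the $\Q_\ell$-analogue of the ring homomorphism of \Cref{fell}. In characteristic zero, by \Cref{comparison} this homomorphism encodes the same information as the singular-cohomology invariant used in \cite{scavia2018motivic}, so the non-triviality established there transfers directly. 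In characteristic $p > 2$, one spreads $G$ and $A$ out to a torus $\mathcal{G}$ and a finite flat group scheme $\mathcal{A}$ of multiplicative type over a complete DVR $\mathcal{O}$ of mixed characteristic $(0,p)$ with residue field $k$; approximates the classifying stacks by smooth quasi-projective quotients $(V \setminus U)/\mathcal{G}$ with $V$ a representation of $\mathcal{G}$ and $U$ a closed subscheme of arbitrarily high codimension; and uses smooth, and if needed proper, base change for $\ell$-adic cohomology to identify the Galois representations computed by $\psi_{\Q_\ell}$ on the special fibre with those on the geometric generic fibre. The generic fibre is of characteristic zero, so \cite[Theorems 1.5, 1.6]{scavia2018motivic} apply.

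The main obstacle will be the spreading-out step: since $B_k G$ and $B_k A$ are not of finite type, one must check that the values of $\psi_{\Q_\ell}$ on the finite-type approximations $\set{(V\setminus U)/\mathcal{G}}/\set{V \setminus U}$ commute with specialization in $\on{Spec}\mathcal{O}$ in a way compatible with taking higher-codimensional $U$, and that the resulting limit still sees the relevant non-trivial Galois-cohomological obstruction coming from the $\Gamma$-module $L$ (resp.\ $M$). This requires assembling smooth-proper base change, the spectral-sequence reduction of the cohomology of $(V\setminus U)/\mathcal{G}$ to $\Gamma$-cohomology of $L$ via $B_k\mathcal{G} \to B_k\Gamma$, and the invariance of the relevant finite-dimensional Galois representation under base change to characteristic zero, all of which are standard for $\ell \nmid 2$ but require some care to set up.
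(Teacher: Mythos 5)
Your overall strategy---reuse the torus $G = R^{(1)}_{E_1\times E_2/k}(\G_m)$ and $A = G[2]$ from \cite{scavia2018motivic}, verify stable rationality by the characteristic-free lattice argument, and detect non-triviality with the invariant $\psi$ of \Cref{fell}---is the right one, and matches the paper in broad outline. However, there is a fatal error in your choice of coefficients, and a second, structural inefficiency that you should understand.

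The fatal error is the choice of $\Q_\ell$-coefficients with $\ell$ odd. After reducing to the putative identity $\set{K}-\set{E_1}-\set{E_2}-\set{E_3}+2=0$ in $K_0(\on{Stacks}_k)$, one clears denominators to $(\set{K}-\sum\set{E_i}+2)f(\L)=0$ in $K_0(\on{Var}_k)$ and applies the invariant; looking at leading coefficients in $t$ produces the relation $[\Lambda[\Gamma]]-\sum_i[\Lambda[\Gamma/\ang{\sigma_i}]]+2[\Lambda]=0$ in $K_0(\on{Rep}_{\mc{G},\Lambda})$, where $\Lambda$ is the coefficient ring. For $\Lambda=\Q_\ell$ (any $\ell$, since $\on{char}\Q_\ell=0$), $\Gamma\cong(\Z/2)^2$-representations are semisimple and decompose into the four characters $\chi_0,\dots,\chi_3$. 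One computes $[\Q_\ell[\Gamma]]-\sum_i[\Q_\ell[\Gamma/\ang{\sigma_i}]]+2[\Q_\ell]=(\sum_{j=0}^3[\chi_j])-(3[\chi_0]+\sum_{j=1}^3[\chi_j])+2[\chi_0]=0$: the relation holds identically, so $\psi_{\Q_\ell}$ is blind to the obstruction. What the paper uses instead is $\psi_{\Z/2}$, landing in $K_0(\on{Rep}_{\mc{G},\F_2})$; here $\F_2[\Gamma]$ and the $\F_2[\Gamma/\ang{\sigma_i}]$ are \emph{indecomposable} (local group rings), hence by Krull--Schmidt (\cite[Lemma 4.1]{scavia2018motivic}) form part of a free $\Z$-basis of $K_0(\on{Rep}_{\mc{G},\F_2})$, and the relation with coefficients $1,-1,-1,-1,2$ visibly fails. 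The constraint $\on{char}k\neq 2$ is what makes $\ell=2$ (with $\Z/\ell$-coefficients) available; your remark ``choose an odd prime $\ell$, possible since $\on{char}k\neq 2$'' has the logic inverted and points to the wrong $\ell$.

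The second issue is that the entire spreading-out/DVR/approximation apparatus is unnecessary. Once you have the identity $(\set{K}-\sum\set{E_i}+2)f(\L)=0$ in $K_0(\on{Var}_k)$, the classes involved are $0$-dimensional smooth proper $k$-schemes, and $\psi_{\Z/2}$ evaluates on them directly---in any perfect $k$ of characteristic $\neq 2$---to classes of permutation $\F_2[\mc{G}]$-modules via $\mc{G}\twoheadrightarrow\Gamma$. There is nothing to specialize from characteristic zero, and no issue about $B_kG$ failing to be of finite type: the reduction to a finite-type identity is already done by \cite{scavia2018motivic} before the invariant is applied. Replacing the comparison-with-characteristic-zero machinery by this one direct computation is precisely what the paper does and is what makes the result genuinely characteristic-free rather than a transport of the characteristic-zero result.
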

	
	\begin{proof}
		(a) Let $K/k$ be a biquadratic extension, let $\Gamma:=\on{Gal}(K/k)\cong (\Z/2)^2$, let $E_1,E_2,E_3$ be the intermediate quadratic extensions of $K/k$, and let $\sigma_i\in \Gamma$ be the unique element such that $K^{\sigma_i}=E_i$, for $i=1,2,3$. Let $G:=R^{(1)}_{E_1\times E_2/k}(\G_{\on{m}})$ be the $k$-torus of rank $3$ used in the proof of \cite[Theorem 1.5]{scavia2018motivic}. As in the first part of the proof of \cite[Theorem 1.5]{scavia2018motivic} (which works as long as $\on{char}k\neq 2$), we have that $B_kG$ is stably rational, and moreover $\set{B_kG}\set{G}=1$ in $K_0(\on{Stacks}_k)$ if and only if 
		\[\set{K}-\set{E_1}-\set{E_2}-\set{E_3}+2=0\] in $K_0(\on{Stacks}_k)$. Recall that the natural homomorphism $K_0(\on{Var}_k)\to K_0(\on{Stacks}_k)$ identifies $K_0(\on{Stacks}_k)$ with $\Phi^{-1}_{\L}K_0(\on{Var}_k)$. It follows that
		\[(\set{K}-\set{E_1}-\set{E_2}-\set{E_3}+2)f(\L)=0,\] for some monic polynomial $f(\L)$ with integer coefficients. Now we apply $\psi^i_{\Z/2}$ and consider the leading coefficients of the resulting polynomial in $t$:
		\begin{equation}\label{lindip}[\F_2[\Gamma]]-[\F_2[\Gamma/\ang{\sigma_1}]]-[\F_2[\Gamma/\ang{\sigma_2}]]-[\F_2[\Gamma/\ang{\sigma_3}]]+2[\F_2]=0\end{equation}	in $K_0(\on{Rep}_{\mc{G},\F_2})$. Here $\mc{G}$ acts on $\Gamma$ via the natural surjection $\mc{G}\to \Gamma$.
		It follows from the Krull-Schmidt Theorem that $K_0(\on{Rep}_{\mc{G},\F_2})$ is freely generated on the set of isomorphism classes of indecomposable $\mc{G}$-representations; see \cite[Lemma 4.1]{scavia2018motivic}. This is in contradiction with (\ref{lindip}), hence $\set{B_kG}\set{G}\neq 1$.
		
		(b) Let $A:=G[2]$ be the $2$-torsion subgroup of $G$. From the proof of \cite[Theorem 1.6]{scavia2018motivic}, we have $\set{B_kA}=\set{B_kG}\set{G}$. By (a), it follows that $\set{B_kA}\neq 1$ in $K_0(\on{Stacks}_k)$.
	\end{proof}

\section{Proof of Theorem \ref{ekedahl-inv}}\label{sec-ekedahl-inv}
Let $k$ be a perfect field of exponential characteristic $p$, and let $R$ be a commutative ring with identity such that $p$ is invertible in $R$. Recall that we denote by $\on{DMT}_{\on{gm}}(k;R)$ the smallest thick triangulated subcategory of $\on{DM}_{\on{gm}}(k;R)$ containing $R(j)$ for every $j\in \Z$. 

\begin{lemma}\label{dmt-idempotent}
	Assume that $R$ is a principal ideal domain. Then $\on{DMT}_{\on{gm}}(k;R)$ is the smallest strictly full subcategory of $\on{DM}_{\on{gm}}(k;R)$ containing $R(j)$ for every $j\in\Z$. 
\end{lemma}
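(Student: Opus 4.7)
The plan is to construct a bounded weight structure $w$ on $\on{DMT}_{\on{gm}}(k;R)$ whose heart is the additive hull $\mc{B}$ of $\{R(j)[2j]:j\in\Z\}$ inside $\on{DM}_{\on{gm}}(k;R)$, and then to use the resulting weight filtration to build every object of $\on{DMT}_{\on{gm}}(k;R)$ from the $R(j)$'s by iterated shifts and cones, without ever invoking retractions. Denote by $\mc{T}$ the smallest strictly full triangulated subcategory of $\on{DM}_{\on{gm}}(k;R)$ containing all the $R(j)$'s. Since $\mc{T}$ is automatically closed under shifts, finite direct sums, and cones, the above will place every object of $\on{DMT}_{\on{gm}}(k;R)$ inside $\mc{T}$, and the reverse inclusion is trivial.

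To invoke \Cref{bondarko1}, I would verify that $\mc{B}$ is a negative additive subcategory of $\on{DM}_{\on{gm}}(k;R)$ that densely generates $\on{DMT}_{\on{gm}}(k;R)$ and is itself idempotent complete. Negativity follows from the standard vanishing of motivic cohomology of a field: for $i>0$ one has
\[\on{Hom}(R(a)[2a], R(b)[2b+i]) = H^{2(b-a)+i,\,b-a}(\on{Spec}k;R) = 0,\]
because $H^{p,q}(\on{Spec}k;R)=0$ whenever $q<0$ or $p>2q$. Dense generation is immediate: the thick subcategory generated by the shifted Tate objects $R(j)[2j]$ coincides with $\on{DMT}_{\on{gm}}(k;R)$. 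The PID hypothesis enters decisively in idempotent completeness: the off-diagonal Hom groups $\on{Hom}(R(a)[2a], R(b)[2b]) = \on{CH}^{b-a}(\on{Spec}k;R)$ vanish for $a\neq b$, so the endomorphism ring of $\bigoplus_i R(j_i)[2j_i]$ is block diagonal of the form $\prod_j M_{m_j}(R)$; every idempotent in $M_n(R)$ then splits off a direct summand of the form $R^m$ because a finitely generated projective module over a PID is free. Invoking \Cref{bondarko1}(b) thus produces a weight structure $w$ on $\on{DMT}_{\on{gm}}(k;R)$ whose heart is exactly $\mc{B}$.

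By \Cref{bondarko1}(a), $\mc{C}^{w\leq 0}$ and $\mc{C}^{w\geq 0}$ are the envelopes of $\bigcup_{j\geq 0}\mc{B}[j]$ and $\bigcup_{j\leq 0}\mc{B}[j]$, and a simple induction along the exhaustion $\on{DMT}_{\on{gm}}(k;R)=\bigcup_n \ang{S}_n$ (with $S=\{R(j):j\in\Z\}$) shows that $w$ is bounded: each $R(j)[n]$ lies in a finite weight range, and direct summands, shifts, and cones preserve this. Every $X\in\on{DMT}_{\on{gm}}(k;R)$ then admits a finite weight tower whose successive graded pieces are of the form $E_n[-n]$ with $E_n\in\mc{B}$; each such graded piece is a finite direct sum of shifts of Tate twists and hence lies in $\mc{T}$. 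Iteratively forming the cones that assemble the tower keeps us inside $\mc{T}$, so $X\in\mc{T}$, as desired. The main obstacle is the idempotent completeness step: without the PID hypothesis the heart of $w$ would be strictly larger than $\mc{B}$ and could contain retracts that need not visibly lie in $\mc{T}$.
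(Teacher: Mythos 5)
Your proof is correct, but it takes a genuinely different route from the paper's. Both arguments begin the same way: both identify the additive hull $\mathcal{B}$ (your $\mathcal{B}$, the paper's $\mathcal{T}_0$) of the objects $R(j)[2j]$, check negativity via vanishing of motivic cohomology of a point, and use the PID hypothesis to deduce that $\mathcal{B}$ is idempotent complete (projectives over a PID are free). The difference is which category carries the weight structure and what one does with it. The paper builds the weight structure on the \emph{small} category $\mathcal{T}$ (the strictly full triangulated subcategory generated by the $R(j)$), applies \Cref{bondarko2} to conclude that $\mathcal{T}$ is idempotent complete, and then invokes \Cref{idem-thick} to deduce that $\mathcal{T}$ is thick, hence contains $\operatorname{DMT}_{\operatorname{gm}}(k;R)$. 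You instead build the weight structure directly on the \emph{large} category $\operatorname{DMT}_{\operatorname{gm}}(k;R)$ and use the bounded weight Postnikov tower to exhibit every object as an iterated cone on objects of $\mathcal{B}$, forcing it into $\mathcal{T}$. Your route sidesteps \Cref{bondarko2} (Bondarko's result that a bounded weight structure with idempotent-complete heart forces the ambient category to be idempotent complete) at the price of invoking the existence of finite weight Postnikov towers, which is of roughly similar depth in Bondarko's theory. One small point worth stating explicitly in your write-up: boundedness of $w$ is verified by observing that each $R(j)[2j]$ lies in the heart and that $\mathcal{C}^{w\leq i}$, $\mathcal{C}^{w\geq j}$ are closed under retracts (axiom~(1) of a weight structure), shifts, and cones, so boundedness propagates through the $\langle S\rangle_n$ filtration of $\operatorname{DMT}_{\operatorname{gm}}(k;R)$ — you gesture at this but it deserves one more sentence.
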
	

\begin{proof}
	Let $\mc{T}$ be the smallest strictly full triangulated subcategory of $\on{DM}_{\on{gm}}(k;R)$ containing $R(j)$ for every $j\in\Z$. To prove that $\mc{T}$ coincides with $\on{DMT}_{\on{gm}}(k;R)$, it suffices to show that $\mc{T}$ is thick.
	Consider the strictly full additive subcategory $\mc{T}_0$ of $\mc{T}$ consisting of finite direct sums of $R(j)[2j]=M^c(\A_k^j)$ for every $j\in \Z$. Triangulated subcategories are closed under translation, hence $R(j)$ belongs to any triangulated subcategory containing $R(j)[2j]$. This shows that $\mc{T}$ is densely generated by $\mc{T}_0$. For every choice of integers $i,j,h$, we have
	\begin{align*}
	\on{Hom}(R(i)[2i], R(j)[2j+h])\cong &\on{Hom}(R(i)[2i-h],R(j)[2j])\\ \cong&H^M_{2i-h}(R(j)[2j],R(i))\\ \cong&H_{2i-h}^M(\A_k^j,R(i))\\ \cong&\on{CH}^{j-i}(\A_k^j,-h),
	\end{align*}	
	where $H^M_*(-,R(*))$ denotes motivic homology.
	It follows immediately that if $h>0$ then \[\on{Hom}(R(i)[2i], R(j)[2j+h])=0,\] hence $\mc{T}_0$ is negative. Moreover, letting $h=0$ we obtain that if $i\neq j$ then \[\on{Hom}(R(i)[2i], R(j)[2j])=\on{CH}^{j-i}(\A_k^j)=0,\] and that $\on{End}(R(i)[2i])=R$. It follows that $\mc{T}_0$ is equivalent to the category of finitely generated graded free $R$-modules ($R(i)[2i]$ corresponds to a copy of $R$ in degree $i$). The idempotent completion of this category is the category of finitely generated graded projective $R$-modules. Since $R$ is a principal ideal domain, every finitely generated projective module is free, hence $\mc{T}_0$ is idempotent complete. 
	
	By \Cref{bondarko1} applied to $\mc{C}=\mc{T}$ and $\mc{B}=\mc{T}_0$, there exists a weight structure $w$ on $\mc{T}$ whose heart equals $\mc{T}_0$. It is clear that $w$ is bounded, hence we may apply \Cref{bondarko2}. We deduce that $\mc{T}$ is idempotent complete. By \Cref{idem-thick}, we conclude that $\mc{T}$ is thick, as desired.
\end{proof}	

Recall that we denote by $\L$ the class of $\A^1_k$ in $K_0(\on{Var}_k)$. We denote by $\L$ the class of the Lefschetz motive in $K_0(\on{Mot}(k;R))$, so that $[\P^1_k]=1+\L$ in $K_0(\on{Mot}(k;R))$, and also the class of $R(1)[2]=M^c(\A_k^1)$ in $K_0^{\on{tr}}(\on{DMT}_{\on{gm}}(k;R))$. The homomorphisms \[K_0(\on{Var}_k)\xrightarrow{\nu_R} K_0(\on{Mot}(k;R))\xrightarrow{\sim} K_0^{\on{tr}}(\on{DM}_{\on{gm}}(k;R))\] send $\L\mapsto\L\mapsto \L$; the isomorphism on the right is (\ref{mot-dm-iso}). Note that for every $i,j\in\Z$, we have $[R(i)[j]]=(-1)^j\L^i$ in $K_0^{\on{tr}}(\on{DM}_{\on{gm}}(k;R))$.

\begin{prop}\label{polynomial}
	Let $R$ be a principal ideal domain, and let $M$ be an object of $\on{DMT}_{\on{gm}}(k;R)$. Then $[M]\in K_0^{\on{tr}}(\on{DM}_{\on{gm}}(k;R))$ is a polynomial in $\L,\L^{-1}$.
\end{prop}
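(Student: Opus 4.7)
The plan is to exploit the bounded weight structure $w$ on $\on{DMT}_{\on{gm}}(k;R)$ that was produced in the proof of \Cref{dmt-idempotent}, and to use Bondarko's comparison between the triangulated Grothendieck group and the additive Grothendieck group of the heart.

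First, I recall from (the proof of) \Cref{dmt-idempotent} that $\on{DMT}_{\on{gm}}(k;R)$ carries a bounded weight structure $w$ whose heart is the additive category $\mc{T}_0$ of finite direct sums of the objects $R(j)[2j]$, $j\in\Z$. Moreover $\mc{T}_0$ is equivalent to the category of finitely generated graded free $R$-modules, so (since $R$ is a PID) $K_0(\mc{T}_0)$ is the free abelian group on the isomorphism classes $[R(j)[2j]]$, $j\in\Z$.

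Next, I would invoke Bondarko's weight-complex formalism \cite[\S 5.3, Theorem 5.3.1]{bondarko2010weight}: for any bounded weight structure $w$ on a triangulated category $\mc{T}$, the inclusion of the heart induces an isomorphism of abelian groups
\[
K_0(\on{Heart}(w))\xrightarrow{\sim} K_0^{\on{tr}}(\mc{T}).
\]
Applied to our situation, this identifies $K_0^{\on{tr}}(\on{DMT}_{\on{gm}}(k;R))$ with $K_0(\mc{T}_0)$, the free abelian group on the $[R(j)[2j]]$.

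Finally, under this identification the class $[R(j)[2j]]$ corresponds to $\L^j$ in $K_0^{\on{tr}}(\on{DM}_{\on{gm}}(k;R))$, by the convention $[R(i)[j]]=(-1)^j\L^i$ recalled just before the proposition. Hence for $M\in \on{DMT}_{\on{gm}}(k;R)$ the class $[M]$ is a finite $\Z$-linear combination of the $\L^j$, i.e.\ a polynomial in $\L,\L^{-1}$, as desired. To land in $K_0^{\on{tr}}(\on{DM}_{\on{gm}}(k;R))$ rather than in $K_0^{\on{tr}}(\on{DMT}_{\on{gm}}(k;R))$ one just pushes forward along the inclusion $\on{DMT}_{\on{gm}}(k;R)\hookrightarrow \on{DM}_{\on{gm}}(k;R)$, which sends $\L\mapsto \L$.

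The only nontrivial ingredient is Bondarko's isomorphism $K_0(\on{Heart}(w))\cong K_0^{\on{tr}}(\mc{T})$ for bounded weight structures; everything else is bookkeeping with the concrete description of $\mc{T}_0$. I expect this to be the main obstacle only in the sense of citing the correct statement, since it is already in the literature we have been using.
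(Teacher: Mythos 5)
Your proof is correct, but it follows a genuinely different route from the paper. The paper proves \Cref{polynomial} by a direct induction on the filtration $\langle S\rangle_n$ of the triangulated subcategory generated by $S=\{R(j)\}$: it uses \Cref{dmt-idempotent} only through its conclusion (that $\on{DMT}_{\on{gm}}(k;R)$ is densely generated by the $R(j)$), writes $[M]=[A]+[B]$ for a suitable triangle at each stage, and concludes by induction. You instead reuse the bounded weight structure $w$ that was \emph{constructed} inside the proof of \Cref{dmt-idempotent} and appeal to Bondarko's comparison isomorphism $K_0(\on{Heart}(w))\xrightarrow{\sim}K_0^{\on{tr}}(\mc{T})$ for bounded weight structures. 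The two approaches trade off as follows. The paper's argument is elementary and self-contained, though one must be a little careful with the direct summands built into $\langle S\rangle_n$; the idempotent completeness of $\mc{T}_0$ (established in \Cref{dmt-idempotent}) is what ultimately makes this harmless. Your argument is more structural: it delivers not just that $[M]$ is a Laurent polynomial in $\L$ but the stronger statement that $K_0^{\on{tr}}(\on{DMT}_{\on{gm}}(k;R))$ is the free abelian group on the $\L^j$, i.e.\ is identified with $\Z[\L,\L^{-1}]$, at the cost of invoking a more serious theorem of Bondarko. (You should double-check the exact theorem number in \cite{bondarko2010weight}; the result is in \S 5.3 of that paper, but I would confirm the tag before relying on it.) Both uses of the PID hypothesis enter in the same place: to ensure the heart $\mc{T}_0$ is idempotent complete.
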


\begin{proof}
	Define $S:=\set{R(j):j\in \Z}$. By \Cref{dmt-idempotent}, $M\in \ang{S}_n$ for some $n\geq 1$. We prove that $[M]\in K_0^{\on{tr}}(\on{DM}_{\on{gm}}(k;R))$ is a polynomial in $\L$ and $\L^{-1}$ by induction on $n$. If $n=1$, then $M$ is a finite direct sum of $R(i)[j]$, and the conclusion is immediate. If $M\in \ang{S}_n$ for some $n>1$, there exists a distinguished triangle \[A\to M\to B\to A[1]\] such that $A\in \ang{S}_1$ and $B\in \ang{S}_{n-1}$. We have $[M]=[A]+[B]$ in $K_0^{\on{tr}}(\on{DM}_{\on{gm}}(k;R))$. By inductive assumption both $[A]$ and $[B]$ are polynomials in $\L$ and $\L^{-1}$, hence so is $[M]$. 
\end{proof}

\begin{proof}[Proof of \Cref{ekedahl-inv}]
	(a) Fix an embedding of $G$ in $\on{GL}_n$, for some $n\geq 1$. By \cite[Corollary 8.15]{totaro2016motive}, $\on{GL}_n/G$ is $\Z_{\ell}$-mixed Tate. Since $M^c(\on{GL}_n/G)$ belongs to $\on{DM}_{\on{gm}}(k;\Z_{\ell})$, by \cite[Theorem 7.2(4)]{totaro2016motive} we have that $M^c(\on{GL}_n/G)$ belongs to the smallest thick subcategory of $\on{DM}_{\on{gm}}(k;R)$ that contains $R(j)$ for all integers $j$, that is, it belongs to $\on{DMT}_{\on{gm}}(k;\Z_{\ell})$. By \Cref{polynomial}, we deduce that $[M^c(\on{GL}_n/G)]\in K_0(\on{DM}_{\on{gm}}(k;R))$ is a polynomial in $\L,\L^{-1}$. It follows that $\nu_{\ell}(\set{\on{GL}_n/G})$ is a polynomial in $\L,\L^{-1}$.
	
	By \cite[Proposition 3.1(i)]{ekedahl2009geometric}, we have $\set{B_kG}=\set{\on{GL}_n/G}\set{B_k\on{GL}_n}$ in $K_0(\on{Stacks}_k)$. By \cite[Proposition 1.1]{ekedahl2009grothendieck} we have
	\[\set{B_k\on{GL}_n}=\set{\on{GL}_n}^{-1}=\prod_{h=0}^{n-1}(\L^n-\L^h)^{-1}\] in $K_0(\on{Stacks}_k)$.
	It follows that  \[\nu_{\ell}(\set{B_kG})=\nu_{\ell}(\set{\on{GL}_n/G})\nu_{\ell}(\set{B_k\on{GL}_n})\] is a rational function of $\L$ in $\Phi_{\L}^{-1}K_0(\on{Mot}(k;\Z_{\ell}))$, that is, it can be expressed as a fraction $f(\L)/g(\L)$, where $f(\L)$ is a polynomial in $\L$ and $g(\L)$ is a product of elements of $\Phi_{\L}$.
	
	Following \cite[Section 2]{ekedahl2009grothendieck}, we let $K_0(\on{Coh}_{k,\ell})$ be the Grothendieck ring of the category of mixed Galois $\Q_{\ell}$-representations; see also \cite[\S 2.2]{bergh2015motivic}. By \cite[p. 6]{ekedahl2009approximating}, we have a ring homomorphism \[\chi_c:\Phi_{\L}^{-1}{K}_0(\on{Var}_k)\to \Phi_{[\Q_{\ell}(-1)[-2]]}^{-1}{K}_0(\on{Coh}_{k,\ell}),\] which sends the class of a smooth $k$-variety $X$ to $\sum_j(-1)^j[H^j_{\text{\'et},c}(\cl{X},\Q_{\ell})]$. We also have a ring homomorphism \[\chi_c':\Phi_{\L}^{-1}K_0(\on{Mot}(k;\Z_{\ell}))\to \Phi_{[\Q_{\ell}(-1)[-2]]}^{-1}{K}_0(\on{Coh}_{k,\ell}),\] which sends the class of a Chow motive $(X,q)$ to $\sum_j(-1)^j[q_*H^j_{\text{\'et}}(\cl{X},\Q_{\ell})]$. We have $\chi_c=\chi'_c\circ \nu_{\ell}$: by \Cref{bittner}, it suffices to check this on the classes of smooth projective varieties, where it is obvious.
	
	We denote by $\varphi: \Phi^{-1}_t\Z[t]\to \Phi^{-1}_{\L}K_0(\on{Var}_k)$ the ring homomorphism given by $t\mapsto \L$. Since $(\nu_{\ell}\circ \varphi)(t)=\L$ and $\nu_{\ell}(\set{B_kG})$ is a rational function of $\L$, we deduce that $\nu_{\ell}(\set{B_kG})$ belongs to the image of $\nu_{\ell}\circ \varphi$. The composition $\chi_c\circ \varphi$ is injective; see \cite[\S 2.2]{bergh2015motivic}. It follows that the restriction of $\chi'_c$ to the image of $\nu_{\ell}\circ \varphi$ is injective. By \cite[Proposition 3.1]{ekedahl2009geometric}, $\chi'_c(\nu_{\ell}(\set{B_kG}))=\chi_c(\set{B_kG})=1$, hence $\nu_{\ell}(\set{B_kG})=1$. It follows that $H^0_{\ell}(\set{B_kG})=[\Z_{\ell}]$ and $H^{i}_{\ell}(\set{B_kG})=0$ for every $i\neq 0$.
	
	(b) By \cite{totaro2016motive}, $B_kG$ is $\Z_{\ell}$-mixed Tate for every $\ell$. The conclusion now follows from (a) and \Cref{comparison}.
\end{proof}

\section{Proof of Theorem \ref{calculation}}\label{sec-calculation}

Recall that a smooth $k$-variety $Y$ is said to be separably unirational if there exists a separable rational map $\P^n_k\dashrightarrow Y$ for some $n\geq 1$.

\begin{lemma}\label{ladic}
	Let $k$ be an algebraically closed field of positive characteristic $p$, let $Y$ be a smooth projective $k$-variety of dimension $d$, and let $\ell$ be a prime different from $p$.
	\begin{enumerate}[label=(\alph*)]
		\item For every $i>0$, we have $H_{\textrm{\'et}}^{2d+i}(Y,\Z_{\ell})=0$.
		\item We have an isomorphism $H_{\textrm{\'et}}^{2d}(Y,\Z_{\ell}(d))\cong \Z_{\ell}$.
		\item If $Y$ is separably unirational, $H_{\textrm{\'et}}^{2d-1}(Y,\Z_{\ell}(d))=0$.
		\item If $Y$ is separably unirational, $H_{\textrm{\'et}}^{2d-2}(Y,\Z_{\ell}(d))_{\on{tors}}\cong \on{Hom}(\on{Br}(Y)\set{\ell},\Q_{\ell}/\Z_{\ell})$.
	\end{enumerate}
\end{lemma}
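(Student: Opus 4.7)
The plan is to handle the four parts in order. Parts (a) and (b) are standard: (a) is Artin's cohomological dimension bound $H^i_{\text{\'et}}(Y, F) = 0$ for $i > 2d$ and $F$ torsion, applied to $F = \Z/\ell^n$ and passed through the inverse limit (Mittag--Leffler is automatic since each group is finite); (b) is the trace map isomorphism $H^{2d}_{\text{\'et}}(Y, \mu_{\ell^n}^{\otimes d}) \cong \Z/\ell^n$ for smooth proper $Y$, again passed through the inverse limit.

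For (c) and (d) the main tool is integral Poincar\'e duality, which combined with the universal coefficient theorem gives, for every $i$,
\[H^{2d-i}_{\text{\'et}}(Y, \Z_\ell(d))_{\on{free}} \cong \on{Hom}_{\Z_\ell}(H^i_{\text{\'et}}(Y, \Z_\ell), \Z_\ell), \qquad H^{2d-i}_{\text{\'et}}(Y, \Z_\ell(d))_{\on{tors}} \cong H^{i+1}_{\text{\'et}}(Y, \Z_\ell)_{\on{tors}}.\]
Taking $i = 1$ reduces (c) to the vanishings $H^1(Y, \Z_\ell) = 0$ and $H^2(Y, \Z_\ell)_{\on{tors}} = 0$; taking $i = 2$ reduces (d) to the identification $H^3(Y, \Z_\ell)_{\on{tors}} \cong \on{Br}(Y)\set{\ell}$ together with Pontryagin self-duality of the finite $\ell$-group $\on{Br}(Y)\set{\ell}$. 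The vanishing $H^1(Y, \Z_\ell) = 0$ is immediate from (separable) unirationality alone: the composition $\P^n \dashrightarrow Y \to \on{Alb}(Y)$ extends to a constant morphism on $\P^n$, so $\on{Alb}(Y) = 0$ and $H^1(Y, \Z_\ell) = T_\ell \on{Pic}^0(Y) = 0$.

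The remaining statements all flow from the $\ell$-adic Kummer sequence
\[0 \to \on{NS}(Y) \otimes \Z_\ell \to H^2_{\text{\'et}}(Y, \Z_\ell(1)) \to T_\ell \on{Br}(Y) \to 0\]
together with the fact that $H^2(Y, \Q_\ell(1))$ is spanned by algebraic cycles for separably unirational $Y$ (equivalently, $\on{Br}(Y)\set{\ell}$ has no divisible part, so $T_\ell \on{Br}(Y) = 0$; this follows by a pushforward-pullback argument $f_* f^* = \deg(f) \cdot \on{id}$ applied to a separable generically finite cover $f\colon X \to Y$ with $X$ a smooth projective alteration dominated by $\P^n$). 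The sequence collapses to $H^2(Y, \Z_\ell(1)) \cong \on{NS}(Y) \otimes \Z_\ell$, and comparing the Bockstein sequence
\[0 \to H^2(Y, \Z_\ell(1))/\ell^n \to H^2(Y, \mu_{\ell^n}) \to H^3(Y, \Z_\ell(1))[\ell^n] \to 0\]
with the mod-$\ell^n$ Kummer sequence $0 \to \on{NS}(Y)/\ell^n \to H^2(Y, \mu_{\ell^n}) \to \on{Br}(Y)[\ell^n] \to 0$ yields $H^3(Y, \Z_\ell)[\ell^n] \cong \on{Br}(Y)[\ell^n]$ for every $n$, which in the direct limit gives (d). For (c), the same collapsed Kummer identifies $H^2(Y, \Z_\ell(1))_{\on{tors}}$ with $\on{NS}(Y)\set{\ell} = \on{Pic}(Y)[\ell^n] = H^1_{\text{\'et}}(Y, \mu_{\ell^n})$ for $n \gg 0$, and this vanishes because the prime-to-$p$ \'etale fundamental group of a smooth projective separably unirational variety is trivial. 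This last input is the main obstacle and is precisely what the separability hypothesis is buying, since merely unirational varieties in positive characteristic can carry nontrivial prime-to-$p$ $\pi_1$, for which the integral vanishing in (c) would fail.
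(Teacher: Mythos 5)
Parts (a) and (b) match the paper exactly. For (c), you take a slightly longer route than the paper: you split $H^{2d-1}(Y,\Z_\ell(d))$ into free and torsion parts via integral Poincar\'e duality and kill each separately (Albanese vanishing for the free part, triviality of the prime-to-$p$ fundamental group for the torsion part), whereas the paper simply observes $H^1(Y,\Z/\ell^n)=0$ from Koll\'ar's triviality of $\pi_1$, applies Poincar\'e duality with finite coefficients, and passes to the inverse limit. Both routes use the same essential input.

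The gap is in (d), specifically in your justification that $T_\ell \on{Br}(Y)=0$ (equivalently, $b_2=\rho$). You invoke ``a pushforward-pullback argument $f_*f^*=\deg(f)\cdot\on{id}$ applied to a separable generically finite cover $f\colon X\to Y$ with $X$ a smooth projective alteration dominated by $\P^n$.'' For this to close, you need $H^2(X,\Q_\ell(1))$ spanned by algebraic cycles; that is automatic if $X$ is smooth projective \emph{rational} (then $\on{Br}(X)=0$), but in positive characteristic you cannot produce a smooth projective rational $X$ over $Y$ without resolution of singularities. What de Jong's theorem gives you, after taking the graph closure $Z\subset\P^n\times Y$ of $\P^n\dashrightarrow Y$, is a smooth projective alteration $X\to Z$; the resulting map $X\to\P^n$ is proper generically finite, so $X$ \emph{dominates} $\P^n$ rather than being dominated by it. Such an $X$ need not be rational or even unirational, so there is no reason for $H^2(X,\Q_\ell(1))$ to be algebraic, and the argument does not close. (And if $X$ were merely unirational rather than rational, the argument would be circular, since $b_2(X)=\rho(X)$ is the same type of statement you are trying to prove.) The paper sidesteps this entirely by citing the theorem of Gounelas--Javanpeykar that a smooth projective rationally chain connected variety over an algebraically closed field satisfies $b_2=\rho$ --- a genuine external input that your self-contained argument does not replace. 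The remainder of (d) (the Bockstein/Kummer comparison and the Poincar\'e duality pairing on torsion) is fine once $T_\ell\on{Br}(Y)=0$ is granted.

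Your closing remark on why separability is indispensable for (c) is accurate, but note that the load-bearing issue in positive characteristic is (d), where the absence of resolution of singularities forces one to import a nontrivial theorem rather than run a transfer argument from $\P^n$.
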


\begin{proof}
	(a) By \cite[Theorem VI.1.1]{milne1980etale}, for every $n\geq 1$ we have $H^{2d+i}_{\text{\'et}}(Y,\Z/\ell^n)=0$. It now suffices to pass to the inverse limit in $n$.
	
	(b) By \cite[Theorem VI.11.1(a)]{milne1980etale}, we have an isomorphism $H^{2d}_{\text{\'et}}(Y,\Z/\ell^n(d))\cong \Z/\ell^n$ for every $n\geq 1$. These isomorphisms are compatible with the surjections $\Z/\ell^{n+1}\to \Z/\ell^n$. Now pass to the inverse limit in $n$.
	
	(c) Let $\cl{y}\in Y(k)$. By a result of Koll\'ar, based on the de Jong-Graber-Harris-Starr Theorem, the algebraic fundamental group $\pi_1(Y,\cl{y})$ of $Y$ is trivial; see \cite[Corollary 3.7]{debarre2011rational}. By \cite[Remark I.5.4 and p. 126]{milne1980etale}, it follows that \[H^1_{\text{\'et}}(Y,\Z/\ell^n)\cong \on{Hom}_{\on{cont}}(\pi_1(Y,\cl{y}),\Z/\ell^n)=0.\] Using \cite[Corollary VI.11.2]{milne1980etale}, we obtain that $H^{2d-1}_{\text{\'et}}(Y,\Z/\ell^n(d))=0$. To complete the proof, it suffices to pass to the inverse limit in $n$. 
	
	(d) We have a non-degenerate pairing
	\[H^3_{\text{\'et}}(Y,\Z_{\ell})_{\on{tors}}\times H_{\text{\'et}}^{2d-2}(Y,\Z_{\ell}(d))_{\on{tors}}\to \Q_{\ell}/\Z_{\ell};\] see \cite[(8.10)]{grothendieck1968brauer3}. It induces an isomorphism \begin{equation}\label{ladic-eq}H_{\text{\'et}}^{2d-2}(Y,\Z_{\ell}(d))_{\on{tors}}\cong \on{Hom}(H^3_{\text{\'et}}(Y,\Z_{\ell})_{\on{tors}},\Q_{\ell}/\Z_\ell).\end{equation}
	We also have a short exact sequence
	\[0\to (\Q_{\ell}/\Z_{\ell})^{b_2-\rho}\to \on{Br}(Y)\set{\ell}\to H^3_{\text{\'et}}(Y,\Z_{\ell})_{\on{tors}}\to 0,\] where $b_2:=\on{dim}_{\Q_{\ell}}H^2_{\text{\'et}}(Y,\Q_{\ell})$ and $\rho$ is the rank of the N\'eron-Severi group of $Y$; see \cite[End of p. 148]{grothendieck1968brauer3} or \cite[Proposition 4.2.6(i)]{colliot2019brauer}.
	Since $Y$ is separably unirational, it is unirational, hence it is rationally chain connected. By \cite[Theorem 1.2]{gounelas2018fano}, we deduce that $b_2=\rho$. We deduce that $H^3_{\text{\'et}}(Y,\Z_{\ell})_{\on{tors}}\cong \on{Br}(Y)\set{\ell}$, and the conclusion now follows from (\ref{ladic-eq}).
\end{proof}


\begin{proof}[Proof of \Cref{calculation}]
	Let $V$ be a faithful $d$-dimensional $G$-representation over $\Q$ (for example, the regular $G$-representation). Let $U$ be the non-empty open subscheme of $V$ where $G$ acts trivially. Using resolution of singularities, we may write 
	\begin{equation}\label{calc-1}\set{{U}/G}=\set{{X}}+\sum_q n_q\set{X_q},\end{equation} where $X$ is a smooth projective $\Q$-variety of dimension $d$, and the $X_q$ are smooth projective $\Q$-varieties of dimension at most $d-1$.
	A standard spreading-out argument shows that there exists a positive integer $n$ with the following property: there exists a (trivial) vector bundle $\mc{V}\to \on{Spec}\Z[1/n]$, such that $G$ acts linearly and faithfully on $\mc{V}$ over $\on{Spec}\Z[1/n]$, so that the locus $\mc{U}$ on which $G$ acts freely is an open and fiber-wise dense subscheme of $\mc{V}$, and there are smooth projective morphisms $\mc{X}\to \on{Spec}\Z[1/n]$ of relative dimension $d$ and $\mc{X}_q\to \on{Spec}\Z[1/n]$ of relative dimension at most $d-1$ such that \[\set{\mc{U}/G}=\set{\mc{X}}+\sum_q n_q\set{\mc{X}_q}\] in $K_0(\on{Sch}_{\Z[1/n]})$, and such that we have $G$-equivariant $\Q$-isomorphisms $\mc{V}_{\Q}\cong V$, $\mc{U}_{\Q}\cong U$, and $\Q$-isomorphisms $\mc{X}_{\Q}\cong X$ and $(\mc{X}_q)_{\Q}\cong X_q$, for all $q$. Moreover, we may assume that there exists an isomorphism between open fiber-wise dense subschemes of $\mc{U}/G$ and $\mc{X}$ over $\on{Spec}\Z[1/n]$. We note that since $G$ is finite and $\mc{V}\to \on{Spec}\Z[1/n]$ is affine, the quotient $\mc{U}\to \mc{U}/G$ exists and is a $G$-torsor.
	Let $k$ be an algebraically closed field of characteristic $p>n$, so that there is a morphism $\on{Spec}k\to \on{Spec}\Z[1/n]$. If $\mc{S}$ is a scheme over $\Z[1/n]$, we define $\mc{S}_k:=\mc{S}\times_{\Z[1/n]}k$. Pulling back along $\on{Spec}k\to \on{Spec}\Z[1/n]$, we obtain a faithful $G$-representation $\mc{V}_k$, such that $G$ acts freely on the non-empty open subscheme $\mc{U}_k$, and
	\begin{equation}\label{calc0}\set{\mc{U}_k/G}=\set{\mc{X}_k}+\sum_q n_q\set{(\mc{X}_q)_k}\end{equation} in $K_0(\on{Var}_k)$. 	
	By \cite[Theorem 3.4]{ekedahl2009geometric}, we have 
	\begin{equation}\label{calc1}\set{B_{\Q}G}\L^d=\set{U/G}+\sum_j m_j\set{B_{\Q}H_j}\L^{a_j}\end{equation}
	in $K_0(\on{Stacks}_{\Q})$, and
	\begin{equation}\label{calc2}\set{B_kG}\L^d=\set{\mc{U}_k/G}+\sum m_j\set{B_kH_j}\L^{a_j}\end{equation} in $K_0(\on{Stacks}_{k})$, where the $H_j$ are distinct proper subgroups of $G$, $a_j<d$, and $m_j\in\Z$. Since $\L$ is invertible in $K_0(\on{Stacks}_k)$, we may divide (\ref{calc2}) by $\L^{d}$:
	\[\set{B_kG}=\set{\mc{U}_k/G}\L^{-d}+\sum_j m_j\set{B_kH_j}\L^{a_j-d}.\]
	We apply $H^i_{\ell}$ to both sides of the equation and use (\ref{calc0}):
	\begin{align}\label{flag}
	H^i_{\ell}(\set{B_kG})=H_{\ell}^{i+2d}(\set{\mc{U}_k/G})+\sum_j m_j H_{\ell}^{i+2(d-a_j)}(\set{B_kH_j})\\
	\nonumber=[H^{i+2d}_{\text{\'et}}(\mc{X}_{{k}},\Z_{\ell}(d))]+\sum_q n_q [H^{i+2d}_{\text{\'et}}((\mc{X}_q)_{{k}},\Z_{\ell}(d))]+\sum_j m_jH_{\ell}^{i+2(d-a_j)}(\set{B_kH_j}).
	\end{align}
	By (\ref{calc-1}) and (\ref{calc1}), we also have 
	\begin{align}\label{flag'}
	H^i_{\ell}(\set{B_{\cl{\Q}}G})=[H^{i+2d}_{\text{\'et}}(X_{\cl{\Q}},\Z_{\ell}(d))]+\sum_q n_q& [H^{i+2d}_{\text{\'et}}((X_q)_{\cl{\Q}},\Z_{\ell}(d))]+\\&+\sum_j m_jH_{\ell}^{i+2(d-a_j)}(\set{B_{\cl{\Q}}H_j}).\nonumber
	\end{align}
	If $G$ is the trivial group, then the conclusion is clear. Assume now $G$ is non-trivial, and that the conclusion holds for all $i\in\Z$ and all proper subgroups of $G$. This means that for every $j$ there exists $N_j$ such that the proposition holds for $H_j$ for every prime $p>N_j$. Let $N$ be an integer strictly greater than $n$, the $N_j$, and all primes dividing the order of $G$, and assume that $\on{char}k=p\geq N$.
	
	We start by showing that $H^i_{\ell}(\set{B_kG})$ is torsion for every $\ell\neq p$. By \cite[Theorem 5.1]{ekedahl2009geometric}, the classes $H^i_{\ell}(\set{B_{\cl{\Q}}G})$ and $H^i_{\ell}(\set{B_{\cl{\Q}}H_j})$ in $K_0(\on{Mod}_{\cl{\Q}})$ are torsion for all $i\neq 0$, and equal to $[\Z_{\ell}]$ if $i=0$. Therefore, (\ref{flag'}) implies \[\on{dim}_{\Q_{\ell}}H^i_{\text{\'et}}(X_{\cl{\Q}},\Q_{\ell}(d))+\sum_q n_q\on{dim}_{\Q_{\ell}}H^i_{\text{\'et}}((X_q)_{\cl{\Q}},\Q_{\ell}(d))+r_i=0,\] where $r_i$ is the sum of the $m_j$, over those $j$ satisfying $i+2(d-a_j)=0$.
	Let $R$ be a discrete valuation ring with residue field $k$, a field of fractions $K$ of characteristic zero, and such that $N$ is invertible in $R$. 
	By the invariance of \'etale cohomology under extensions of separably closed field  \cite[Corollary VI.2.6]{milne1980etale}, we have \[\on{dim}_{\Q_{\ell}}H^i_{\text{\'et}}(X_{\cl{K}},\Q_{\ell}(d))+\sum_q n_q\on{dim}_{\Q_{\ell}}H^i_{\text{\'et}}((X_q)_{\cl{K}},\Q_{\ell}(d))+r_i=0.\]
	By the proper base change theorem in \'etale cohomology \cite[Theorem VI.2.1]{milne1980etale}, we deduce that
	\begin{equation}\label{torsion}\on{dim}_{\Q_{\ell}}H^i_{\text{\'et}}(\mc{X}_{k},\Q_{\ell}(d))+\sum_q n_q\on{dim}_{\Q_{\ell}}H^i_{\text{\'et}}((\mc{X}_q)_{k},\Q_{\ell}(d))+r_i=0.\end{equation}
	Combining (\ref{flag}), (\ref{torsion}) and the inductive assumption, we deduce that $H^i_{\ell}(\set{B_kG})$ is torsion.
	
	Assume now that $i>0$. By the inductive assumption and \Cref{ladic}(a) every term on the right hand side of (\ref{flag}) is zero, hence $H^i_{\ell}(\set{B_kG})=0$. 
	
	If $i=0$, by \Cref{ladic}(a) and (b) the only non-zero term on the right-hand side of (\ref{flag}) is $H^{2d}_{\text{\'et}}(\mc{X}_{{k}},\Z_{\ell}(d))\cong \Z_{\ell}$. 
	
	If $i=-1$, by \Cref{ladic}(c) the first term in the right-hand side of (\ref{flag}) is \[[H^{2d-1}_{\text{\'et}}(\mc{X}_{{k}},\Z_{\ell}(d))]=0.\] Note that $\mc{X}_{{k}}$ is birationally equivalent to $\mc{U}_k/G$, which is separably unirational since $\on{char}k$ does not divide the order of $G$. By \Cref{ladic}(a) and (b), together with the inductive assumption, the other terms in the right-hand side of (\ref{flag}) are torsion-free (that is, integer multiples of $[\Z_{\ell}])$. It follows that $H^{-1}_{\ell}(\set{B_kG})$ is torsion-free. Since it is also torsion, it must be zero.
	
	Assume now that $i=-2$. By the inductive assumption we have
	\[
	H_{\ell}^{2(d-a_j-1)}(\set{B_kH_j})=
	\begin{cases}
	0\quad &\text{if $a_j<d-1$,}\\
	[\Z_{\ell}] &\text{if $a_j=d-1$.}\\
	\end{cases}
	\]
	Since $\on{dim}(\mc{X}_q)_k\leq d-1$, by \Cref{ladic}(a) and (b) $H^{2d-2}_{\text{\'et}}((\mc{X}_q)_{{k}},\Z_{\ell}(d))$ is torsion-free for every $q$. Since $H^{-2}_{\ell}(\set{B_kG})$ is torsion, it now follows from (\ref{flag}) that \[H^{-2}_{\ell}(\set{B_kG})=[H^{2d-2}(\mc{X}_k,\Z_{\ell}(d))_{\on{tors}}].\] 
	By \Cref{ladic}(d), we conclude that \[H^{-2}_{\ell}(\set{B_kG})=[\on{Hom}(\on{Br}(\mc{X}_k)\set{\ell},\Q_{\ell}/\Z_{\ell})].\] Since $\mc{X}_k$ is birationally equivalent to $\mc{V}_{k}/G$, we have $\on{Br}(\mc{X}_{{k}})\cong \on{Br}_{\on{nr}}({k}(B_kG))$. This completes the proof.
\end{proof}

If $k$ is a field of characteristic zero, there exists a finite group $G$ such that $\set{B_kG}\neq 1$ in $K_0(\on{Stacks}_k)$; see \cite[Corollary 5.2]{ekedahl2009geometric}. We may now extend this statement to sufficiently large positive characteristic.

\begin{cor}\label{saltman}
	There exist a finite group $G$ and an integer $N\geq 0$ such that for every field $k$ of positive characteristic $p\geq N$, $\set{B_kG}\neq 1$ in $K_0(\on{Stacks}_k)$.  
\end{cor}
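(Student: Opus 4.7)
First, base change induces a ring homomorphism $K_0(\on{Stacks}_k) \to K_0(\on{Stacks}_{\cl{k}})$ sending $\set{B_kG}$ to $\set{B_{\cl{k}}G}$, so it suffices to establish the statement for algebraically closed $k$. By Saltman's construction, there exist a finite group $G$ and a prime $\ell_0$ such that, for every faithful $G$-representation $V$ over $\C$, one has $\on{Br}_{\on{nr}}(\C(V)^G)\set{\ell_0} \neq 0$; by the Lefschetz principle the same holds over $\cl{\Q}$. Choosing a smooth projective birational model $X$ of $V/G$ over $\Q$, this reads as $\on{Br}(X_{\cl{\Q}})\set{\ell_0} \neq 0$.

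Next I would reuse the spreading-out from the proof of \Cref{calculation}: $X$ extends to a smooth projective family $\mc{X} \to \on{Spec}\Z[1/n]$ for some $n$. Let $N$ be the constant produced by \Cref{calculation} for this $G$, enlarged to exceed $n$, $\ell_0$, and $|G|$. For $k$ algebraically closed of characteristic $p \geq N$, fix a DVR $R$ with residue field $k$ and fraction field $K$ of characteristic zero, and apply proper base change in \'etale cohomology with $\Z/\ell_0^m$ coefficients along $\on{Spec}\cl{K} \to \on{Spec} R \leftarrow \on{Spec} k$; passing to the inverse limit in $m$ yields an isomorphism of $\Z_{\ell_0}$-modules
\[
H^3_{\text{\'et}}(\mc{X}_k, \Z_{\ell_0}) \cong H^3_{\text{\'et}}(X_{\cl{\Q}}, \Z_{\ell_0}).
\]
Since $p > |G|$, the quotient $V_k/G$ is separably unirational, and so is the birational $\mc{X}_k$. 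The argument of \Cref{ladic}(d) therefore applies on both sides and gives $\on{Br}(\mc{X}_k)\set{\ell_0} \cong \on{Br}(X_{\cl{\Q}})\set{\ell_0} \neq 0$; birational invariance then yields $\on{Br}_{\on{nr}}(k(V)^G)\set{\ell_0} \neq 0$.

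Applying \Cref{calculation} gives $H^{-2}_{\ell_0}(\set{B_kG}) = [\on{Hom}(\on{Br}_{\on{nr}}(k(V)^G)\set{\ell_0}, \Q_{\ell_0}/\Z_{\ell_0})] \neq 0$, and since this invariant vanishes on $1 \in K_0(\on{Stacks}_k)$ we conclude $\set{B_kG} \neq 1$. The main technical obstacle is the propagation of integral cohomological information from characteristic zero to characteristic $p$: unlike the $\Q_\ell$-dimension equality exploited in the proof of \Cref{calculation}, here one must work with $\Z/\ell^m$-coefficients and pass to the inverse limit in order to transport the torsion subgroup of $H^3_{\text{\'et}}$ (equivalently, the $\ell_0$-primary part of the Brauer group) between the generic and special fibers of $\mc{X}$.
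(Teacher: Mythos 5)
Your proof is correct, but it does more work than the paper needs. The key difference is where you invoke Saltman. The paper cites Saltman's construction in a characteristic-free form: the $q$-group $G$ of \cite[Theorem 3.5]{saltman1984noether} satisfies $\on{Br}_{\on{nr}}(\cl{k}(V)^G)\neq 0$ for \emph{every} field $k$ of characteristic different from $q$ (the underlying computation is in the cohomology of $G$ with $\mu_q$-coefficients, which is insensitive to the characteristic away from $q$). With that in hand, \Cref{calculation} immediately gives $H^{-2}_{q}(\set{B_kG})\neq 0$ for $p\geq N$, and one concludes. You instead only use Saltman over $\C$ (and $\cl{\Q}$ via Lefschetz), and then you supply the extra step of transporting $\on{Br}_{\on{nr}}\neq 0$ from characteristic $0$ to characteristic $p$: spreading out the smooth projective model $\mc{X}/\Z[1/n]$, applying smooth proper base change with $\Z/\ell_0^m$-coefficients and taking the inverse limit to compare $H^3_{\text{\'et}}(\mc{X}_k,\Z_{\ell_0})$ with $H^3_{\text{\'et}}(X_{\cl{\Q}},\Z_{\ell_0})$, and then invoking \Cref{ladic}(d) on both sides. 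This is correct — an isomorphism of finitely generated $\Z_{\ell_0}$-modules restricts to torsion, and separable unirationality on both fibers is exactly what makes $b_2=\rho$ and hence $\on{Br}(Y)\set{\ell_0}\cong H^3_{\text{\'et}}(Y,\Z_{\ell_0})_{\on{tors}}$ — but it is a detour. Your argument has the virtue of showing that the corollary would survive even if Saltman's theorem were only known in characteristic zero; the paper's approach is shorter because it doesn't need that robustness. Your initial reduction to algebraically closed $k$ matches the paper's (which performs the same reduction at the end), and your enlargement of $N$ past $\ell_0$ corresponds to the paper's ``We may assume $N>q$''.
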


\begin{proof}
	Let $q$ be a prime, and let $G$ be a finite $q$-group constructed in \cite[Theorem 3.5]{saltman1984noether}. It has the property that $\on{Br}_{\on{nr}}(V)^G\neq 0$ for every field $k$ of characteristic different from $q$, and every faithful $\cl{k}$-representation $V$ of $G$. Let $N\geq 0$ be the constant given by applying \Cref{calculation} to $G$. We may assume that $N>q$. Then for every field $k$ of characteristic $p\geq N$, we have $H^2_q(\set{B_kG})\neq 0$, where we view $\set{B_kG}$ as an element of $K_0(\on{Stacks}_{\cl{k}})$. We conclude that $\set{B_kG}\neq 1$ in $K_0(\on{Stacks}_{\cl{k}})$, hence $\set{B_kG}\neq 1$ in $K_0(\on{Stacks}_{{k}})$.
\end{proof}

\section*{Acknowledgements}
I thank Mikhail Bondarko for helpful conversations on the topic of \cite{bondarko2011motivic}, and my advisor Zinovy Reichstein for many useful suggestions on the exposition.

\end{document}